\documentclass[12pt, reqno]{amsart}
\makeatletter
\DeclareMathAlphabet{\mathpzc}{OT1}{pzc}{m}{it}
\usepackage[margin=2cm]{geometry}
\usepackage{amsmath, amssymb, amsfonts, amstext, verbatim, amsthm, mathrsfs}
\usepackage[mathcal]{eucal}
\usepackage{microtype}
%\usepackage[all]{xy}
%\openup 1.8\jot
\linespread{1.4}

\usepackage[all,arc,knot,poly]{xy}
\usepackage{enumerate}
\usepackage{mathrsfs}
\usepackage{mathtools}
\usepackage{amsxtra}
\usepackage{needspace}
\usepackage{graphicx}
\usepackage{setspace}

\usepackage[usenames]{color}
\usepackage{aliascnt} 
\usepackage[colorlinks=true,linkcolor=blue,citecolor=blue,urlcolor=blue,citebordercolor={0 0 1},urlbordercolor={0 0 1},linkbordercolor={0 0 1}]{hyperref} 
\usepackage{enumerate}
\usepackage{xspace}

\usepackage{tikz}
\usetikzlibrary{decorations.pathreplacing,cd}
\usetikzlibrary{matrix,arrows}

\usepackage{xy}

\theoremstyle{plain}

\newcommand{\refnewtheoremn}[4]{
\newaliascnt{#1}{#2}
\newtheorem{#1}[#1]{#3}
\aliascntresetthe{#1}
\expandafter\providecommand\csname #1autorefname\endcsname{#4}}

\newcommand{\refnewtheorem}[3]{\refnewtheoremn{#1}{#2}{#3}{#3}}

\def\makeCal#1{
\expandafter\newcommand\csname c#1\endcsname{\mathcal{#1}}}
\def\makeBB#1{
\expandafter\newcommand\csname b#1\endcsname{\mathbb{#1}}}
\def\makeFrak#1{
\expandafter\newcommand\csname f#1\endcsname{\mathfrak{#1}}}

\count@=0
\loop
\advance\count@ 1
\edef\y{\@Alph\count@}
\expandafter\makeCal\y
\expandafter\makeBB\y
\expandafter\makeFrak\y
\ifnum\count@<26
\repeat
%%%%

\newtheorem{thm}{Theorem}[section]
\newtheorem{theorem}{Theorem}[section]

\refnewtheorem{lemma}{thm}{Lemma}
\refnewtheorem{claim}{thm}{Claim}
\refnewtheorem{cor}{thm}{Corollary}
\refnewtheorem{conj}{thm}{Conjecture}
\refnewtheorem{prop}{thm}{Proposition}
\refnewtheorem{proposition}{thm}{Proposition}
\refnewtheorem{quest}{thm}{Question}

\refnewtheorem{assumption}{thm}{Assumption}
\refnewtheorem{problem}{thm}{Problem}

\theoremstyle{definition}
\refnewtheorem{remark}{thm}{Remark}
\refnewtheorem{remarks}{thm}{Remarks}
\refnewtheorem{defn}{thm}{Definition}
\refnewtheorem{definition}{thm}{Definition}
\refnewtheorem{definitions}{thm}{Definitions}

\refnewtheorem{notn}{thm}{Notation}
\refnewtheorem{const}{thm}{Construction}
\refnewtheorem{example}{thm}{Example}
\refnewtheorem{fact}{thm}{Fact}

% operator names

\newcommand {\id}{\operatorname{id}}

\renewcommand{\Im}{\operatorname{Im}}
\renewcommand{\Re}{\operatorname{Re}}
\newcommand {\Hom}{\operatorname{Hom}}
\newcommand {\Aut}{\operatorname{Aut}}

\newcommand{\DT}{\operatorname{DT}}

\newcommand{\Ad}{\operatorname{Ad}}

%\newcommand{\Twist}{\langle \Nil(\D),\Sph_*(\D)\rangle}

% arrows and symbols

\newcommand{\into}{\hookrightarrow}

\newcommand {\<}{\langle}
\renewcommand {\>}{\rangle}
\newcommand {\bra}{\langle}
\newcommand {\ket}{\rangle}
\newcommand{\isom}{\cong}
\newcommand{\half}{\frac{1}{2}}
\newcommand{\tensor}{\otimes}

\newcommand{\Mer}{\mathcal{M}}

\newcommand{\del}{\partial}

% miscellaneous

\newcommand{\dual}{\vee}

\renewcommand{\b}{\, \b \,}

% preamble
\renewcommand{\b}{\, | \,}
\newcommand{\wb}{\, \Big|\, }

\renewcommand{\omega}{a}

%%%%%%%%%%%%%%%%%%%%%%%%%%%%%%%%%%%%%%%%%%%%%%%%%
%%%%%%%%%%%%%%%%%%%%%%%%%%%%%%%%%%%%%%%%%%%%%%%%%
%%%%%%%%%%%%%%%%%%%%%%%%%%%%%%%%%%%%%%%%%%%%%%%%%
%%%%%%%%%%%%%%%%%%%%%%%%%%%%%%%%%%%%%%%%%%%%%%%%%

\begin{document}

\title[A quantized Riemann-Hilbert problem in Donaldson-Thomas theory]{A quantized Riemann-Hilbert problem in\\ Donaldson-Thomas theory}
\author{Anna Barbieri, Tom Bridgeland and  Jacopo Stoppa}

\address{Universit\`a degli Studi di Milano, Dipartimento di matematica \lq\lq F.\ Enriques\rq\rq ,
Via C.\ Saldini 50, 20133 Milano, Italy \newline\indent
(previously) University of Sheffield,  Hicks building, Hounsfield Road, S3 7RH Sheffield, UK
}
\email{anna.barbieri1@unimi.it}
\address{University of Sheffield, Hicks building, Hounsfield Road, S3 7RH Sheffield, UK}
%\email{a.barbieri@sheffield.ac.uk}
\email{t.bridgeland@sheffield.ac.uk}
\address{SISSA, via Bonomea 265, 34136 Trieste, Italy, and \newline\indent Institute for Geometry and Physics, Strada Costiera 11, 34151 Trieste, Italy}
%\address{SISSA, via Bonomea 265, and Institute for Geometry and Physics, Strada Costiera 11,\newline\indent 34151 Trieste, Italy}
\email{jstoppa@sissa.it}
%\date{\today}

\begin{abstract}
	We introduce  Riemann-Hilbert problems determined by refined Donaldson-Thomas theory. They involve piecewise holomorphic maps from the complex plane to the group of automorphisms of a quantum torus algebra. We study the simplest case in detail and use the Barnes double gamma function to construct a solution.
\end{abstract}

\maketitle

\section{Introduction}

There has been  recent interest in a class of Riemann-Hilbert problems  that are naturally suggested by the form of the wall-crossing formula in Donaldson-Thomas (DT) theory. These problems involve piecewise holomorphic maps from the complex plane to the group of automorphisms of a Poisson algebraic torus, with  discontinuities along a  collection of rays prescribed by the DT invariants. Such problems appeared in the physics literature in the  work of Gaiotto, Moore and Neitzke \cite{GMN1,GMN2}, and have since been considered  by mathematicians \cite{RHuncoupled,RHDT,conifold,FS}. 

The works listed above are mostly concerned with  Riemann-Hilbert problems  defined using unrefined DT invariants. In this paper we consider an analogous class of  Riemann-Hilbert problems arising in refined DT theory. These  involve maps into the group of automorphisms of  a quantum torus algebra. Earlier  discussions of such  quantum Riemann-Hilbert problems appear  in \cite{CNV,FS2}.

In this paper we consider the special case of a refined BPS structure satisfying the conditions of Definition \ref{leaf} below. The basic example  is the one arising from the refined DT theory of  the A$_1$ quiver.  We give an explicit solution to the corresponding quantum Riemann-Hilbert problem in terms of products of modified gamma functions. We also write the  solution  in adjoint form using a modified version of the Barnes double gamma function.
It is intriguing to note that this same function arises in expressions for the partition functions of supersymmetric gauge theories \cite[Appendix A]{ON}.

We conclude by discussing two natural limits of the adjoint form of the  solution, which both relate to the classical Riemann-Hilbert problem studied in \cite{RHuncoupled,RHDT}. In one of these limits we find the Hamiltonian generating function for the classical solution. In the other, we rather unexpectedly find the  $\tau$-function introduced in \cite{RHDT}. % In physical terms these limits seem to correspond  to the unrefined and Nekrasov limits of the partition function respectively.

 \subsection{Refined BPS structures}
 
 In \cite{RHDT} the output of unrefined DT theory was axiomatised to give the definition of a BPS structure. This is a special case of Kontsevich and Soibelman's notion of a stability structure \cite{KS}. The natural analogue for refined DT theory  reads as follows (compare  also \cite[Section 4]{FS2}).
 
 \begin{defn}
 A refined BPS structure $(\Gamma,Z,\Omega)$ consists of data\begin{itemize}
\item[(a)] A finite-rank free abelian group $\Gamma\isom \bZ^{\oplus n}$, equipped with a skew-symmetric form \[\<-,-\>\colon \Gamma \times \Gamma \to \bZ;\]

\item[(b)] A homomorphism of abelian groups
$Z\colon \Gamma\to \bC$;\smallskip

\item[(c)] A map of sets
\[\Omega\colon \Gamma\to \bQ\big[\bL^{\pm \half}\big], \qquad \Omega(\gamma)=\sum_{n\in \bZ} \Omega_n(\gamma)\cdot (\bL^{\half})^n,\]
\end{itemize}

where $\bL^{\frac{1}{2}}$ is a formal symbol, satisfying the following two conditions:
\begin{itemize}
\item[(i)] Symmetry: $\Omega(-\gamma) = \Omega(\gamma)$ for all $\gamma \in \Gamma$, and $\Omega(0) = 0$;\smallskip

\item[(ii)] Support property: fixing a norm  $\|\cdot\|$ on the finite-dimensional vector space $\Gamma\tensor_\bZ \bR$, there is a constant $C>0$ such that 
\begin{equation*}
\Omega(\gamma)\neq 0 \implies |Z(\gamma)|> C\cdot \|\gamma\|.\end{equation*}
\end{itemize}
\end{defn}
The original definition is recovered by setting $\bL=1$ and taking $\Omega$ to be a map $\Gamma\to \bQ$.

The support property will  in fact play no role in what follows since we  only consider refined BPS structures satisfying much stronger finiteness constraints. 

\begin{defn}
\label{leaf}
We say that a refined BPS structure $(Z,\Gamma,\Omega)$ is
\begin{itemize}
\item[(a)] finite if $\Omega(\gamma)=0$ for all but finitely many classes $\gamma\in \Gamma$;
\smallskip

\item[(b)] uncoupled if
$\Omega(\gamma_1)\neq 0 \text{ and } \Omega(\gamma_2)\neq 0 \implies \<\gamma_1,\gamma_2\>=0;$
\smallskip

\item[(c)] palindromic if  $\Omega_n(\gamma)=\Omega_{-n}(\gamma)$ for all $n\in \bZ$ and $\gamma\in \Gamma$;\smallskip
\item[(d)] integral if $\Omega_n(\gamma)\in \bZ$ for all $n\in \bZ$ and $\gamma\in \Gamma$.\end{itemize}
\end{defn}

For the most part in this paper we shall restrict attention to the  following example, which  satisfies all the  conditions of Definition \ref{leaf}.
\begin{example}[Doubled A$_1$ structure]
\label{mineral}
Given an element $z\in \bC^*$ there is an associated refined BPS structure $(\Gamma,Z,\Omega)$  defined by the following   data:
\begin{itemize}
\item[(a)] the lattice $\Gamma\isom \bZ^{\oplus 2}$ and skew-symmetric form $\<-,-\>$ are
\[\Gamma=\bZ\alpha\oplus\bZ\alpha^\vee, \qquad \<\alpha^\vee,\alpha\>=1;\]

\item[(b)] the group homomorphism $Z\colon \Gamma\to \bC$ is determined by \[z=Z(\alpha)\in \bC^*,\qquad Z(\alpha^\vee)=0;\]

\item[(c)] the only nonzero refined BPS invariants are $\Omega(\pm \alpha)=1$.
\end{itemize}
This example  corresponds mathematically to the refined Donaldson-Thomas theory of the A$_1$ quiver. In physical terms it describes the BPS spectrum of $U(1)$  gauge theory (see Remark \ref{physics}). We will refer to it as the doubled A$_1$ refined BPS structure.\end{example}

\subsection{The quantum Riemann-Hilbert problem}

Let $(\Gamma,Z,\Omega)$ be a refined BPS structure. 
Precisely formulating the associated quantum Riemann-Hilbert problem  is a non-trivial task, at least as difficult as the analogous problem in the  unrefined situation, which was discussed at length in \cite{RHDT}.
Here we just give the rough idea. 
Morally-speaking, the quantum Riemann-Hilbert problem  takes values in the group of automorphisms of the quantum torus algebra
\[\bC_q[\bT]=\bigoplus_{\gamma\in \Gamma} \bC\big[\bL^{\pm \frac{1}{2}}\big]\cdot x_\gamma,\qquad x_{\gamma_1}*x_{\gamma_2}= \bL^{\half\bra \gamma_1,\gamma_2\ket}  \cdot x_{\gamma_1+\gamma_2},\]
which is a quantization of the ring of functions on the algebraic torus \begin{equation}
\label{toto}\bT=\Hom_\bZ(\Gamma,\bC^*)\isom (\bC^*)^n.\end{equation}
We introduce the one-parameter family of automorphisms
\[\epsilon_Z(t)\in  \Aut \bC_q[\bT], \qquad \epsilon_Z(t)(x_\gamma)=\exp(Z(\gamma)/t)\cdot x_\gamma,\]
which lift the pull-backs by the corresponding translations of the classical torus.

Recall the quantum dilogarithm function\footnote{The reader should be aware that there are different conventions for this function in the literature (see 
Remark \ref{convent}).}\begin{equation}
\label{qexp}\bE_q(x)=\prod_{k\geq 0} (1-q^k x),\qquad \bE_q(x)^{-1}=\sum_{n\geq 0} \frac{ x^n }{(1-q)\cdots (1-q^n)}.\end{equation}
The active rays $\ell\subset \bC^*$  of the BPS structure are defined to be the rays of the form $\ell=\bR_{>0}\cdot Z(\gamma)$ for classes  $\gamma\in \Gamma$ satisfying $\Omega(\gamma)\neq 0$. To each such ray  we would like to attach a product\footnote{\label{change}The
formula  for $\DT_q(\ell)$ given in the published version is incorrect, although since it gives the correct answer in Example \ref{mineral}  the only further changes required are to Sections 5.2 and 5.3. We thank Sergey Alexandrov for pointing out the error and patiently helping us to fix it.}
\begin{equation}\label{tir}\DT_q(\ell)=\prod_{Z(\gamma)\in \ell}\,  \prod_{n\in \bZ} \, \bE_\bL\left((- \bL^\half)^{n+1} x_\gamma\right)^{(-1)^{n-1}\,\Omega_n(\gamma)}\in \bC_q[\bT],\end{equation}
and then consider the associated automorphism
\begin{equation}
\label{end}\bS_q(\ell)=\Ad_{\DT_q(\ell)} \in  \Aut \bC_q[\bT].\end{equation}

Of course, since the product in \eqref{qexp} is infinite, the expressions \eqref{tir} and \eqref{end}  do not make rigorous sense, and it is therefore necessary to work in some extension of the quantum torus, or perhaps to pursue some entirely different approach.
Ignoring these difficulties for a moment longer, the quantum Riemann-Hilbert problem can be  roughly stated as follows.

\begin{problem}\label{rough} (Heuristic version). Find a piecewise holomorphic function
\[\Phi \colon \bC^* \to  \Aut \bC_q[\bT],\]
satisfying the three conditions
\begin{itemize}
\item[(i)] As $t$ crosses an active ray $\ell\subset \bC^*$ in the clockwise direction, the map $\Phi(t)$ jumps by the corresponding automorphism
\[\Phi(t)\mapsto \Phi(t)\circ \bS_q(\ell);\]
	
	\item[(ii)] As $t\to 0$  we have $\Phi(t) \circ \epsilon_Z(t)\to \id$;
\smallskip

\item[(iii)] The function $\Phi(t)$ has \lq\lq moderate growth\rq\rq\ as $t\to \infty$.
\end{itemize}
\end{problem}

At present we only know how to  rigorously formulate, let alone solve, the quantum Riemann-Hilbert problem  under the  conditions (a)-(d) of Definition \ref{leaf}, and we will therefore restrict to this case from now on.
In fact, for any such refined BPS structure, the solution of the associated quantum Riemann-Hilbert problem can be obtained by  superimposing  solutions to  the  problem  defined by  the refined BPS structure of Example \ref{mineral}. Thus we will now restrict attention  to this doubled A$_1$ case. We will return to the general case  in Section \ref{general}.

\subsection{The doubled A$_1$  example}

Fix an element $z\in \bC^*$ and consider the corresponding refined BPS structure $(\Gamma,Z,\Omega)$ of Example \ref{mineral}. Introduce the following alternative generators of the quantum torus algebra $\bC_q[\bT]$:
\[q^{\frac{1}{2}}:=-\bL^{\frac{1}{2}}, \qquad y_{m\alpha+n\alpha^\vee}:=(-1)^{m(n+1)} \cdot x_{m\alpha+n\alpha^\vee}.\]For an explanation of these signs see Section \ref{quad}.  Although they are not strictly necessary, introducing them  will lead to simpler formulae below.  

There are two active rays  $\ell_{\pm} =\pm \bR_{>0}\cdot z$, and the corresponding expressions \eqref{tir} are
\[\DT_q(\ell_\pm)=\bE_{\bL}(-\bL^{\half} x_{\pm \alpha})^{-1}=\bE_q(-q^{\half} y_{\pm \alpha})^{-1}.\]
To make rigorous sense of these elements we will first  need to modify the quantum torus algebra.
Define the extended quantum torus algebra to be the non-commutative algebra
\begin{equation}\label{ex}\widehat{\bC_q[\bT]}=\bigoplus_{n\in \bZ} \Mer(\cH\times \bC) \cdot y_{n\alpha^\dual},\end{equation}
where $\Mer(\cH\times \bC)$ denotes the field of meromorphic functions $f(\tau,\theta)$ on the product of the upper half-plane $\cH$ with the complex plane $\bC$, and 
the product is
\begin{equation}\label{produ}\Big(f_1(\tau,\theta)\cdot y_{n_1\alpha^\dual}\Big) *\Big( f_2(\tau,\theta)\cdot y_{n_2\alpha^\dual}\Big)={ f_1(\tau,\theta)\cdot f_2(\tau,\theta+n_1\tau)}\cdot y_{(n_1+n_2)\alpha^\dual}.\end{equation}

There is a  commutative subalgebra
\begin{equation}\label{comm}\widehat{\bC_q[\bT]}_0=\Mer(\cH\times \bC)\cdot 1 \subset \widehat{\bC_q[\bT]}.\end{equation}
and  an injective homomorphism
$I\colon \bC_q[\bT]\into \widehat{\bC_q[\bT]}$ defined by \[I\colon q^{\frac{k}{2}} \cdot y_{m\alpha+n\alpha^\dual}\mapsto \exp\big( \pi i  (k+mn)\tau+2\pi i m\theta\big)\cdot y_{n\alpha^\dual}.\]
We will often identify elements of $\bC_q[\bT]$ with their images under the embedding $I$. Note that
\[I(q^{\half})=\exp(\pi i \tau)\cdot 1, \qquad I(y_\alpha)=\exp(2\pi i \theta)\cdot 1, \qquad I(y_{\alpha^{\vee}})=y_{\alpha^{\vee}}.\]

The product in the definition of the quantum dilogarithm \eqref{qexp} is absolutely convergent for $|q|<1$ and there are  therefore well-defined elements
\[\DT_q(\ell_\pm)=\bE_{e^{2\pi i \tau}}\big(\!  -\! e^{\pi i \tau\pm 2\pi i \theta}\big)^{-1}\cdot 1\in \widehat{\bC_q[\bT]}_0,\]
and corresponding automorphisms \[\bS_q(\ell_\pm)=\Ad_{\DT_q(\ell_\pm)}\in \Aut \widehat{\bC_q[\bT]}.\]

It is now straightforward to write down a rigorous version of the quantum Riemann-Hilbert problem (see Problem \ref{probl_trasl} below). Since there are only two active rays, analytically continuing the two parts of the solution leads to maps\[\Phi_\pm \colon \bC^*\setminus  i\ell_{\pm}   \to  \Aut \widehat{\bC_q[\bT]},\]
satisfying the jumping relation
	\begin{equation}
	\Phi_+(t)=\begin{cases}  \Phi_-(t) \circ \bS_q(\ell_+)&\mbox{ if }\Re(t/z)>0,\\  \Phi_-(t) \circ\bS_q(\ell_-)^{-1} &\mbox{ if }\Re(t/z)<0,\end{cases}
	\end{equation}
	together with natural limiting conditions at $t=0$ and $t=\infty$.

\subsection{Solution to the doubled A$_1$ problem}

Due to a symmetry of the problem, a solution to the above quantum Riemann-Hilbert problem can  be deduced from the solution to the corresponding classical  problem  studied in \cite{RHuncoupled}. Unfortunately we can only prove uniqueness properties for this solution under some strong additional hypotheses (see Remark \ref{uniquness}).

To describe this solution, let us first introduce
the equivalent maps
\[\Psi_\pm \colon \bC^*\setminus i\ell_{\pm} \to  \Aut \widehat{\bC_q[\bT]},\qquad \Psi(t)=\Phi(t)\circ \epsilon_Z(t). \]
We also introduce the 
modified  gamma function
	\begin{equation}\label{al}
	\Lambda(w,\eta\b 1):=\frac{\Gamma(w+\eta)\cdot e^w}{\sqrt{2\pi}\cdot w^{\eta+w-\frac{1}{2}}},
\end{equation}
which is meromorphic and single-valued for $w\in\bC^*\setminus\bR_{<0}$ and  $\eta\in\bC$.

\begin{thm}
\label{t2}
For each $z\in \bC^*$, the unique automorphisms  $\Psi_{\pm}(t)\in \Aut \widehat{\bC_q[\bT]}$ which act trivially on the subalgebra \eqref{comm},  and satisfy
\[\Psi_\pm (t)\left(y_{\alpha^{\vee}}\right) = \Lambda\left(\pm \frac{z}{2\pi i t},\frac{1}{2} \mp\Big(\theta+\frac{\tau}{2}\Big) \wb  1\right)^{\pm 1}\cdot y_{\alpha^{\vee}}, \]
give a  solution to the above quantum Riemann-Hilbert problem.\end{thm}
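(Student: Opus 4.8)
The plan is to strip the statement down to the single scalar function by which $\Psi_\pm(t)$ multiplies $y_{\alpha^\vee}$, and then to match the three conditions of the quantum Riemann--Hilbert problem with analytic properties of the modified gamma function \eqref{al}. I would first record the underlying algebraic fact: an automorphism of $\widehat{\bC_q[\bT]}$ that restricts to the identity on the subalgebra $\widehat{\bC_q[\bT]}_0$ is completely determined by its value on $y_{\alpha^\vee}$. Indeed, from \eqref{produ} one has $y_{n\alpha^\vee}=y_{\alpha^\vee}^{*n}$ together with the single commutation relation $y_{\alpha^\vee}*f=\sigma(f)*y_{\alpha^\vee}$, where $\sigma(f)(\tau,\theta)=f(\tau,\theta+\tau)$; a direct check shows that for any nonvanishing $g\in\Mer(\cH\times\bC)$ the rule ``fix $\widehat{\bC_q[\bT]}_0$ and send $y_{\alpha^\vee}\mapsto g\cdot y_{\alpha^\vee}$'' is compatible with this relation and so extends uniquely to an automorphism, acting on $y_{n\alpha^\vee}$ by the finite product $\prod_{k=0}^{n-1}\sigma^{k}(g)$. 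This yields at once the uniqueness clause of the theorem and the well-definedness of $\Psi_\pm$, since $\Lambda(\cdots\mid1)^{\pm1}$ is a nonvanishing meromorphic function of $(\tau,\theta)$ off the excluded ray. It then remains to verify the three conditions for the multiplier
\[ g_\pm=\Lambda\Big(\pm w,\ \tfrac12\mp\big(\theta+\tfrac{\tau}{2}\big)\,\Big|\,1\Big)^{\pm1},\qquad w:=\frac{z}{2\pi i t}. \]

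I would then turn the jumping relation into a functional equation. Using \eqref{tir}--\eqref{end} and the commutation relation, $\bS_q(\ell_+)=\Ad_{\DT_q(\ell_+)}$ acts on $y_{\alpha^\vee}$ by multiplication by $D/\sigma(D)$, where $D\in\Mer(\cH\times\bC)$ is the coefficient of $\DT_q(\ell_+)$; this infinite product telescopes, all but one factor cancelling, to leave $\bS_q(\ell_+)(y_{\alpha^\vee})=(1+e^{\pi i\tau+2\pi i\theta})^{-1}\cdot y_{\alpha^\vee}$, with an analogous computation for $\ell_-$. Conjugating by the translation $\epsilon_Z(t)$, which on the extended algebra is realized as the shift $\theta\mapsto\theta+z/(2\pi i t)$ and fixes $y_{\alpha^\vee}$, the prescribed jump of the pair $(\Psi_+,\Psi_-)$ reduces, after setting $\eta=\tfrac12-(\theta+\tfrac{\tau}{2})$, to the single scalar identity
\[ \frac{g_+}{g_-}=\Lambda(w,\eta\mid1)\,\Lambda(-w,1-\eta\mid1)=\big(1-e^{\mp 2\pi i(\eta+w)}\big)^{-1}, \]
with the upper sign on $\Re(t/z)>0$ and the lower on $\Re(t/z)<0$.

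Verifying this identity is, I expect, the principal obstacle --- not through length but through the bookkeeping of the branch of the power $w^{\eta+w-\frac12}$ in \eqref{al}. Inserting the definition, the two exponential prefactors $e^{\pm w}$ cancel, Euler's reflection formula converts $\Gamma(w+\eta)\,\Gamma(1-\eta-w)$ into $\pi/\sin\!\big(\pi(\eta+w)\big)$, and the quotient of the two power functions contributes the branch factor $e^{\mp i\pi(\eta+w-\frac12)}$, whose sign is precisely the choice $-1=e^{\mp i\pi}$ forced by the side of the cut, i.e.\ by the sign of $\Re(t/z)$. These combine to collapse the product to $\big(1-e^{\mp2\pi i(\eta+w)}\big)^{-1}$, as required. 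The substitution $\theta\mapsto-\theta$, $w\mapsto-w$, $\eta\mapsto1-\eta$ is the symmetry interchanging the two rays and the two branches, so a single reflection identity disposes of both jumps; this is also the precise sense in which the solution is inherited from the classical problem of \cite{RHuncoupled}, whose solution is built from the same modified gamma function.

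It remains to treat the two limits, which are governed by the asymptotics of $\Lambda$. As $t\to0$ the argument $\pm w\to\infty$ away from $\bR_{<0}$, and the normalization in \eqref{al} is chosen exactly so that Stirling's formula yields $\Lambda\to1$; hence $g_\pm\to1$, and since $y_{n\alpha^\vee}$ is sent to a finite product of $\sigma$-shifts of $g_\pm$, we obtain $\Psi_\pm(t)\to\id$, which is condition (ii). As $t\to\infty$ we have $w\to0$ and $\Lambda(w,\eta\mid1)\sim \Gamma(\eta)\,w^{\frac12-\eta}/\sqrt{2\pi}$, so $g_\pm$ is bounded by a fixed power of $1/t$; this is the moderate-growth condition (iii). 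Assembling the three verifications shows that the automorphisms $\Psi_\pm$ solve Problem \ref{probl_trasl}, completing the proof.
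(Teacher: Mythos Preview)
Your proof is correct and follows essentially the same route as the paper: reduce everything to the scalar multiplier on $y_{\alpha^\vee}$, identify the jump with the reflection identity for $\Lambda$, and read off the $t\to0$ and $t\to\infty$ behaviour from Stirling-type asymptotics. The only difference is packaging: the paper isolates the computation of $\bS_q(\ell_\pm)(y_{\alpha^\vee})$ and the reflection formula $\Lambda(w,\eta\mid1)\Lambda(-w,1-\eta\mid1)=(1-e^{\mp 2\pi i(w+\eta)})^{-1}$ as separate lemmas (Lemma~\ref{oneone} and Proposition~\ref{lemGamma}(c)) and then simply cites them, whereas you derive both inline via the telescoping of $\bE_q$ and Euler's reflection for $\Gamma$.
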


An interesting new possibility in the quantum case is to express the solution of Theorem \ref{t2} in adjoint form. Namely, for each $z\in \bC^*$,  we can write
\begin{equation}
\label{rainy}\Psi_\pm(t)=\Ad_{\psi_\pm(t)}, \qquad \psi_\pm\colon \bC^*\setminus  i\ell_{\pm}   \to  \widehat{\bC_q[\bT]_0}.\end{equation}
Although this does not specify the functions $\psi_\pm$ uniquely, we show that a natural choice   is to take
\begin{equation}
	\label{adjy}\psi_{\pm}(t)=F\Big(\pm \frac{z}{2\pi i t},\frac{1+\tau}{2}\mp \theta\wb1, \tau\Big)^{-1}\cdot 1,\end{equation}
where the function $F(w,\eta\b \omega_1,\omega_2)$ is a modification of the Barnes double gamma function. More precisely
\begin{equation*}
\label{f}F(w,\eta\b \omega_1,\omega_2)=\Gamma_2(w+\eta \b \omega_1,\omega_2)\cdot e^{g(w,\eta\b\omega_1,\omega_2)}\cdot w^{\frac{1}{2}B_{2,2}(w+\eta\b\omega_1,\omega_2)},\end{equation*}
where $\Gamma_2$ denotes the Barnes double gamma function, and
\[g(w,\eta\b\omega_1,\omega_2)={-\frac{3w^2}{4\omega_1\omega_2}-\frac{\eta w}{\omega_1\omega_2}+\frac{w(\omega_1+\omega_2)}{2\omega_1\omega_2}},\]
\[B_{2,2}(x\b\omega_1,\omega_2)=\frac{x^2}{\omega_1\omega_2} -\Big(\frac{1}{\omega_1} + \frac{1}{\omega_2}\Big) x+\frac{1 }{6}\Big(\frac{\omega_2}{\omega_1} + \frac{\omega_1}{\omega_2}\Big)+\frac{1}{2}.\]

This modification  of the double gamma function is entirely analogous to the modification \eqref{al} of the usual gamma function: it is designed to eliminate the  sporadic terms in the large $|w|$ asymptotic expansion of the function $\Gamma_2(w+\eta\b \omega_1,\omega_2)$. In fact we prove that when $\Re(\omega_i)>0$ there is an expansion \[\log F(w,\eta \b \omega_1,\omega_2) \sim
\sum_{k\geq 1}\frac{(-1)^k\cdot B_{2,k+2}(\eta\b \omega_1,\omega_2)}{k(k+1)(k+2)} \cdot w^{-k},
	\]
where $B_{2,n}(x\b \omega_1,\omega_2)$ denote the double Bernoulli polynomials, which is valid as $|w|\to \infty$ in any closed subsector of the half-plane $\Re(w)>0$.

\subsection{Two interesting  limits} There are two limits to the solution of Theorem \ref{t2} which it is interesting to consider, and which relate to the  classical Riemann-Hilbert problem  studied in \cite{RHuncoupled,RHDT}. The classical problem, at least heuristically,  looks for  maps
\[\Phi_\pm \colon \bC\setminus i\ell_{\pm}\to \Aut(\bT),\]
where $\bT$ is the classical torus \eqref{toto}, satisfying conditions analogous to those in Problem \ref{rough} above. As above, it is convenient to write
\[\Psi_\pm \colon \bC\setminus i\ell_{\pm}\to \Aut(\bT),\qquad \Phi_{\pm}(t)= \Psi_{\pm}(t)\circ \epsilon_Z(t),\]
where $\epsilon_Z(t)$ is the translation of $\bT$ defined by \[\epsilon_Z(t)^*(y_\gamma)=\exp(Z(\gamma)/t)\cdot y_\gamma.\]
 It is shown in \cite{RHuncoupled} that a possible solution  is given by
\begin{equation}
	\label{cl2}\Psi_\pm(t)^*(y_{\alpha})=y_\alpha, \qquad \Psi_\pm(t)^*(y_{\alpha^\vee})= \Lambda\left(\pm \frac{z}{2\pi i t}, \frac{1}{2}\mp \theta\wb1\right)^{\pm 1}\cdot y_{\alpha^\vee},\end{equation}
where we have written $y_\alpha=\exp(2\pi i \theta)$.

The first limit  consists of sending $\tau\to 0$. The product on the quantum torus $\bC_q[\bT]$ induces in this limit a  Poisson bracket on  $\bT$ given explicitly by
\[\{y_{\gamma_1},y_{\gamma_2}\}=\<\gamma_1,\gamma_2\> \cdot y_{\gamma_1+\gamma_2}.\]
In this limit  the solution to the quantum Riemann-Hilbert problem specified in Theorem \ref{t2} becomes the solution \eqref{cl2} to the corresponding classical problem. The adjoint description \eqref{adjy} becomes the statement  that the automorphisms $\Psi_\pm(t)\in \Aut(\bT)$  which solve the classical problem are the time 1 Hamiltonian flow of the functions
\[H_\pm (z,t,\theta)=\lim_{\tau\to 0} \Big((2\pi i \tau)\cdot \log \psi_{\pm}(t)\Big).\]
In terms of the explicit description \eqref{cl2} this is   the identity
\[\frac{\partial }{\partial \theta} H_\pm (z,t,\theta)= \mp (2\pi i)\cdot \log \Lambda\Big(\pm \frac{z}{2\pi i t}, \frac{1}{2}\mp \theta\b1\Big).\]
In Section \ref{abab} we give an explicit description of the functions $H_\pm(z,t,\theta)$ in terms of the Barnes $G$-function.

The second limit consists of setting $\tau=1$ and hence $q^{\half}=-1$, and seems to correspond in physical terms to the Nekrasov limit. Although the quantum torus algebra $\bC_q[\bT]$ becomes commutative in this limit, the extension \eqref{ex}-\eqref{produ} does not. It is convenient to express the limiting  function in the form
\begin{equation}
	\label{finished}\Upsilon\left(\frac{\pm z}{2\pi i t},\mp \theta\right)=  \left( \frac{\pm z}{2\pi i t}\right)^{\frac{1}{12}}\cdot \lim_{\tau\to 1} \psi_\pm(t),\end{equation}where the function $\Upsilon(w,\eta)$  can again be expressed in terms of the Barnes G-function \eqref{upsy}.

 When $\theta=0$
the expressions \eqref{finished} coincide with the $\tau$-functions appearing in \cite[Section 5.4]{RHDT}, so we can view \eqref{finished} as an extension of this function to arbitrary values of $\theta$. Note however that there is a confusing shift  here: with our conventions the classical Riemann-Hilbert problem studied in \cite[Section 5.3]{RHDT} corresponds to $\theta=\half$.
The defining relation \eqref{rainy} gives in the limit an identity
\[\Upsilon\left(\pm \frac{ z}{2\pi i t},\mp\left(\theta+\half\right)\right)=\Upsilon\left(\pm \frac{ z}{2\pi i t}\mp\left(\theta-\half\right)\right)\cdot\Lambda\left(\pm \frac{z}{2\pi i t}, \half\mp \theta\b1\right)^{\pm 1}.\]
This difference relation may give some clue  as to the true nature of the $\tau$-function, whose definition  in \cite{RHDT} remains rather mysterious.

\subsection*{Acknowledgments} We thank Dylan Allegretti,  Pierrick Bousseau, Lotte Hollands, Sven Meinhardt, Andy Neitzke, Tom Sutherland, and particularly John Calabrese, for useful comments and correspondence. The first two authors have received funding from the European Research Council, ERC-AdG  StabilityDTCluster. 
%This research was supported by the European Research Council, through the project ERC-AdG StabilityDTCluster.

%%%%%%%%%%%%%%%%%%%%%%%%%%%%%%%%%%%%%%%%%%%%%%%%%
%%%%%%%%%%%%%%%%%%%%%%%%%%%%%%%%%%%%%%%%%%%%%%%%%
%%%%%%%%%%%%%%%%%%%%%%%%%%%%%%%%%%%%%%%%%%%%%%%%%
%%%%%%%%%%%%%%%%%%%%%%%%%%%%%%%%%%%%%%%%%%%%%%%%%

\section{Special functions}
\label{sec_special_funct}

The solution to our quantum Riemann-Hilbert problem \label{prob_pa} can be expressed using modified versions of the Barnes multiple gamma functions $\Gamma_1(x\b\omega_1)$ and $\Gamma_2(x\b\omega_1,\omega_2)$. In this section we recall the definition of these  functions and  review some of their basic properties. We then introduce the two modifications $\Lambda(w,\eta\b\omega_1)$ and $F(w,\eta\b\omega_1,\omega_2)$ appearing in the Introduction. 

\subsection{Multiple Bernoulli polynomials}
\label{sec_B2}
Let $N>0$ be a positive integer, and  fix a vector of non-zero complex numbers \begin{equation*}
\underline{a}=(a_1,\dots, a_N)\in (\bC^*)^N.\end{equation*}
In what follows we shall make frequent use  of the  multiple Bernoulli polynomials $B_{N,k}(x\b \underline{a})$. These polynomials are defined by the expansion
	\begin{equation}\label{Bpoly}
	\frac{t^N e^{xt}}{\prod_{i=1}^N(e^{a_i t}-1)} =\sum_{k\geq 0}  B_{N,k}(x \b \underline a)\cdot \frac{t^k}{k!}.
	\end{equation}
	They satisfy the difference relations
\[B_{N,k}(x+a_i\b a_1,\cdots a_N) - B_{N,k}(x\b a_1,\cdots a_N) = k\,B_{N-1,k-1}(x\b a_1,\cdots, a_{i-1},a_{i+1},\cdots,a_N),\] 
and the homogeneity property 
\begin{equation}
\label{homoBnk}B_{N,k}(\lambda x\b \lambda\underline a) = \lambda^{k-N} B_{N,k}(x\b \underline{a}), \qquad \lambda\in \bC^*.\end{equation}
The polynomials $B_k(x)=B_{1,k}(x\b 1)$ coincide with the classical Bernoulli polynomials.
	
It will be useful to have explicit expressions for a few of these polynomials to hand:
\[B_{1,0}(x\b\omega_1)=\frac{1}{\omega_1}, \qquad B_{1,1}(x\b \omega_1) = \frac{x}{\omega_1}-\frac{1}{2},\qquad B_{1,2}(x\b a_1)=\frac{x^2}{a_1}-x+\frac{a_1}{6}.\]
\[B_{2,0}(x \b \omega_1,\omega_2)= \frac{1}{\omega_1\omega_2},\qquad B_{2,1}(x \b \omega_1,\omega_2)=\frac{x}{\omega_1\omega_2} -\frac{\omega_1+\omega_2}{2\omega_1\omega_2},\]
\[B_{2,2}(x\b\omega_1,\omega_2)=\frac{x^2}{\omega_1\omega_2} -\Big(\frac{1}{\omega_1} + \frac{1}{\omega_2}\Big) x+\frac{1 }{6}\Big(\frac{\omega_2}{\omega_1} + \frac{\omega_1}{\omega_2}\Big)+\frac{1}{2}.\]
These can be  obtained by multiplying out the classical Bernoulli polynomial expansions for the $N$ individual factors appearing on the left of \eqref{Bpoly}.

\subsection{Multiple gamma  functions}\label{sec_barnes}
We recall here the definition of the Barnes multiple gamma functions.  Our basic references  for this material are \cite{FR,JM, Ruj2}. Most of the results we need can also be found in Barnes' original papers \cite{Barnes2,Barnes3}, although it is important to note that these older sources use a different normalization: see  \cite[Equation (3.19)]{Ruj2}.

We again fix a positive integer $N>0$ and a vector of non-zero complex numbers \begin{equation*}
\underline{a}=(a_1,\dots, a_N)\in (\bC^*)^N.\end{equation*} Assume for now that $\Re(a_i)>0$ for all $i$.  Let us also fix an element $x\in \bC$. The Barnes multiple zeta function is defined by the sum
\begin{equation*}
	\zeta_N(s,x \b \underline a)=\big.\sum_{\underline n \in (\bZ_{\geq 0})^N} (x+\underline n \cdot \underline a)^{-s},		
	\end{equation*}
which is absolutely convergent for $\Re(s)>N$. It can be analytically continued, \cite[Section 3]{Ruj2}, to a single-valued meromorphic function of $s\in \bC$, with poles only at the points $s=1,2, \cdots, N$.

Assuming again that $\Re(\omega_i)>0$, the Barnes multiple gamma function $\Gamma_N$ is defined by the formula
	\begin{equation*}
	\Gamma_N(x\b\underline a) := \exp \frac{\del}{\del s} \zeta_N(s, x \b \underline a)|_{s=0}.
	\end{equation*}
	This is a meromorphic function of $x\in \bC$, without zeroes, and whose poles,  \cite[Section 3]{Ruj2}, lie at the points of the form \[x=-\sum_{i=1}^N m_i \omega_i, \qquad m_i\in \bZ_{\geq 0}.\] 
	
	The functions $\zeta_N(s,x\b \underline a)$ and $\Gamma_N(x\b\underline a)$ are generalizations of the Hurwitz zeta function $\zeta_H(s,x)$ and the gamma function $\Gamma(x)$ respectively. Indeed, \cite[Equations (3.23) and (3.27)]{Ruj2} give \begin{equation}\label{bl}\zeta_1(s,x\b a)= a^{-s}\cdot \zeta_H\left(\frac{x}{a}\right),\qquad \Gamma_1(x\b a)= \frac{1}{\sqrt{2\pi}}\cdot \Gamma\left(\frac{x}{a}\right) \cdot a^{\frac{x}{a}-\frac{1}{2}},\end{equation}
	where we take the principal branch of $\log(a)$ on the half-plane $\Re(a)>0$.

The main property of the zeta function we shall use is the difference equation
	\begin{equation*}
	\zeta_{N}(s,x\b a_1,\cdots a_N) - \zeta_{N}(s,x+ a_i\b a_1,\cdots,a_N) = \zeta_{N-1}(s,x\b a_1,\cdots, a_{i-1},a_{i+1},\cdots,a_N),
	\end{equation*}
which is immediate from the definition. This relation  induces an analogous relation for $\log\Gamma_N$.

The multiple gamma functions have the homogeneity property 
\begin{equation}\label{homogeneity}
	\Gamma_N(\lambda \cdot x \b \lambda \cdot \underline a)=\exp\left(\frac{1}{N!}\cdot (-1)^{N-1}\cdot B_{N,N}(x \b \underline a)\cdot \log(\lambda)\right) \cdot \Gamma_N(x \b \underline a),
	\end{equation}
	valid for $\lambda\in \bC^*\setminus \bR_{<0}$ such that $\Re(\lambda\cdot a_i)>0$ for all $i$. We take the principal branch of $\log(\lambda)$. This follows immediately from the definition once one knows  \cite[Appendix A]{JM} that
\begin{equation*}
\zeta_N(0,x \b \underline a)=\frac{(-1)^N}{N!} \cdot  B_{N,N}(x \b \underline a).\end{equation*}
The relation \eqref{homogeneity}  allows us to analytically continue the function $\Gamma_N(x\b \underline a)$ to the domain
\[a_i\in \bC^*\setminus \bR_{<0}, \qquad \Re(a_i/a_j)>0, \qquad 1\leq i,j\leq N.\]
In words, we allow the parameters $a_i$ to vary freely in the domain $\bC^*\setminus \bR_{<0}$ providing that they all lie in a single open half-plane.

\subsection{Modified gamma function}\label{sec_Lambdafun}
It will be useful to consider certain modifications of the Barnes gamma functions designed to kill the sporadic terms in their asymptotics as $x\to \infty$. We first consider the case $N=1$. 
Take $\omega\in \bC^*$ with $a\in \bC^*\setminus\bR_{<0}$ and consider the  function
\[\Lambda(w,\eta\b\omega)=\Gamma_1(w+\eta\b \omega)\cdot \exp(-B_{1,1}(w+\eta\b \omega)\log(w))\cdot \exp\Big(\frac{w}{\omega}\Big),\]
where $w\in \bC^*\setminus\bR_{<0}$ and $\eta\in \bC$, and we take the principal branch of $\log(w)$. Using \eqref{bl} we can rewrite this as
\begin{equation}
\label{dall}\Lambda(w,\eta\b\omega)=(2\pi)^{-\frac{1}{2}}\cdot \Gamma\Big(\frac{w+\eta}{\omega}\Big)\cdot \exp\Big(\frac{w}{\omega}\Big)\cdot \Big(\frac{w}{\omega}\Big)^{\frac{1}{2}-\frac{w+\eta}{\omega}},\end{equation}
although one should be a little careful here, since with our chosen analytic continuations, the expression $\log(w/a)=\log(w)-\log(a)$ is specified by the principal branches of the functions $\log(w)$ and $\log(a)$  on the domain $\bC^*\setminus \bR_{<0}$. 

\begin{prop}\label{lemGamma} The function $\Lambda(w,\eta\b \omega)$ has the following properties:
\begin{itemize}
\item[(a)] It is a single-valued, meromorphic  function of the variables $w,\omega\in \bC^*\setminus\bR_{<0}$ and  $\eta\in \bC$. It has no zeroes, and poles only at the points
 \[w+\eta=n \omega, \qquad n\in \bZ_{\leq 0}.\]

\item[(b)]  It has a homogeneity property
\begin{equation}
\label{homogg}\Lambda(\lambda w,\lambda \eta\b \lambda \omega)=\Lambda(w,\eta\b \omega),\end{equation}
for $\lambda\in \bC^*$ such that $\lambda w,\lambda \omega\in \bC^*\setminus\bR_{<0}$.\smallskip

\item[(c)] On the half-plane $\pm \Im(w/\omega)>0$ it satisfies the reflection property 
 \begin{equation}\Lambda(w,\eta\b\omega)\cdot \Lambda(-w,\omega-\eta\b\omega) = 
\left(1 - e^{\pm \frac{2\pi i (w+\eta)}{\omega}}\right)^{-1}.\end{equation}

\item[(d)] For fixed $\omega\in \bC^*\setminus\bR_{<0}$ and $\eta\in \bC$ there is a constant $k>0$  such that for $0<|w|\ll 1$%has algebraic behaviour around the origin, i.e.\ 
 \[|w|^{k}<|\Lambda(w,\eta\b\omega)|<|w|^{-k}.\]

\item[(e)] When $a\in \bR_{>0}$  and $\eta\in \bC$ there is an asymptotic expansion
	\[
	\log \Lambda(w,\eta\b\omega)\sim  \sum_{k\geq 1} \frac{(-1)^{k+1}B_{1,k+1}(\eta\b\omega)}{k(k+1)} \cdot w^{-k}.
	\]
valid as $|w|\to \infty$ in any closed subsector of $\bC^*\setminus\bR_{<0}$.
	\end{itemize}
\end{prop}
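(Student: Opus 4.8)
The plan is to run every part off the closed form \eqref{dall}, treating the $\Gamma_1$, the exponential, and the power factor separately. Parts (a) and (b) are then essentially bookkeeping. For (a), the factors $\exp(w/\omega)$ and $(w/\omega)^{1/2-(w+\eta)/\omega}=\exp\big((\tfrac12-\tfrac{w+\eta}{\omega})\log(w/\omega)\big)$ are single-valued, zero-free and pole-free on $w,\omega\in\bC^*\setminus\bR_{<0}$ once $\log(w/\omega)=\log(w)-\log(\omega)$ is read via the principal branches as explained after \eqref{dall}; hence all zeros and poles of $\Lambda$ come from $\Gamma((w+\eta)/\omega)$, which is zero-free with simple poles exactly at $(w+\eta)/\omega\in\bZ_{\leq0}$, i.e. at $w+\eta=n\omega$ with $n\in\bZ_{\leq0}$. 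For (b) I would simply note that in \eqref{dall} every argument enters through the ratios $(w+\eta)/\omega$ and $w/\omega$, both invariant under $(w,\eta,\omega)\mapsto(\lambda w,\lambda\eta,\lambda\omega)$, and that the stated hypotheses on $\lambda$ guarantee the branches agree, so \eqref{homogg} is immediate.

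For (c) I would first use \eqref{homogg} to reduce to $\omega=1$ (legitimate on the half-planes $\pm\Im(w/\omega)>0$, where $w/\omega\notin\bR$), and then compute $\Lambda(w,\eta\b1)\,\Lambda(-w,1-\eta\b1)$ directly from \eqref{dall}: the two exponentials cancel, the gamma factors combine through the reflection formula $\Gamma(s)\Gamma(1-s)=\pi/\sin(\pi s)$ with $s=w+\eta$, and the power factors contract to $w^{\frac12-s}(-w)^{s-\frac12}$. The only delicate point is the branch of $(-w)$ relative to $w$: on $\Im(w)>0$ one has $\log(-w)-\log(w)=-i\pi$, on $\Im(w)<0$ one has $+i\pi$, which produces exactly the sign $\pm$ and, after rewriting $\pi/\sin(\pi s)$, the claimed $(1-e^{\pm2\pi i(w+\eta)})^{-1}$. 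Part (d) is a local analysis at $w=0$: writing $\log|\Lambda|=\Re\log\Lambda$ and inspecting \eqref{dall}, the only unbounded contributions as $w\to0$ are $\Re\big((\tfrac12-\tfrac{w+\eta}{\omega})\log(w/\omega)\big)$ and, when $\eta/\omega\in\bZ_{\leq0}$, the pole of $\log\Gamma$; each grows like a constant multiple of $\log|w|$ (the piece $\tfrac{w}{\omega}\log(w/\omega)$ tends to $0$), so $\log|\Lambda|\sim c\,\log|w|$ for some real $c$ and any $k>|c|$ gives the two-sided bound.

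The real work is (e). Here I would substitute the classical Stirling expansion $\log\Gamma(z)\sim(z-\tfrac12)\log z-z+\tfrac12\log2\pi+\sum_{n\geq1}\tfrac{B_{2n}}{2n(2n-1)}z^{1-2n}$, with $B_{2n}$ the Bernoulli numbers, into $\log$ of \eqref{dall}, setting $z=(w+\eta)/\omega$ and $u=w/\omega$. The whole point of the modification is that the non-decaying pieces conspire to cancel: the $\tfrac12\log2\pi$ terms drop out, $-z+u=-\eta/\omega$ is constant, and the logarithmic terms combine into $(z-\tfrac12)\log(z/u)=(z-\tfrac12)\log(1+\eta/w)$, leaving a genuine asymptotic power series in $1/w$. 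Since $\omega\in\bR_{>0}$ keeps $z$ in the same sector as $w$, the sectorial validity of Stirling transfers to any closed subsector of $\bC^*\setminus\bR_{<0}$. The substantive step, which I expect to be the main obstacle, is the generating-function identity showing the reorganized series equals $\sum_{k\geq1}\tfrac{(-1)^{k+1}B_{1,k+1}(\eta\b\omega)}{k(k+1)}w^{-k}$: I would pass to rescaled variables through $B_{1,k+1}(\eta\b\omega)=\omega^{k}B_{k+1}(\eta/\omega)$ (a case of \eqref{homoBnk}) and match against the Bernoulli generating function \eqref{Bpoly}, verifying order by order in $w^{-k}$ that the contributions of $(z-\tfrac12)\log(1+\eta/w)$ and of the Stirling tail $\sum_n\tfrac{B_{2n}}{2n(2n-1)}z^{1-2n}$ assemble into the single polynomial $B_{k+1}(\eta/\omega)$; as a sanity check, at $k=1$ both sides reduce to $\tfrac12 B_{1,2}(\eta\b\omega)$.
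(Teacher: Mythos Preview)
Your proposal is correct and follows essentially the same route as the paper's own proof: parts (a), (b), (d) are read off directly from \eqref{dall} and standard properties of $\Gamma$; part (c) is reduced via homogeneity to $\omega=1$ and then handled by the Euler reflection formula with the branch bookkeeping you describe; and part (e) is the Stirling expansion after the homogeneity reduction, which is exactly what you do in substance (your use of $B_{1,k+1}(\eta\b\omega)=\omega^{k}B_{k+1}(\eta/\omega)$ is precisely the reduction via \eqref{homoBnk} and \eqref{homogg} that the paper invokes). The paper's proof is terser---it cites \cite{RHuncoupled} for the details of (c) and simply says (e) ``is a form of the Stirling expansion''---but the underlying argument is the same as yours.
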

\begin{proof}
Properties (a), (b) and (d) are clear from expression \eqref{dall} and well-known properties of the gamma function. For (c) we can use the homogeneity property to reduce to the case $a=1$, when the given relation is a simple consequence of the Euler reflection formula for the gamma function: see   Lemma 3.1 in \cite{RHuncoupled} for more details. For (e) we can reduce to the case $a=1$ using the homogeneity properties \eqref{homoBnk} and \eqref{homogg}, when the claim is a form of the Stirling expansion. \end{proof}

\subsection{Modified double gamma function}\label{sec_F}
Take parameters $\omega_1,\omega_2\in \bC^*\setminus \bR_{<0}$ and assume  that $\Re(\omega_2/\omega_1)>0$.  In this section we consider the function
\begin{equation}\begin{split}
\label{christmas}
F(w,\eta \b \omega_1,\omega_2):= \Gamma_2(w+\eta \b \omega_1,\omega_2)\cdot \exp\left(\half B_{2,2}(w+\eta \b \omega_1,\omega_2) \log w\right)\cdot\qquad \\
\cdot \exp\bigg({-\frac{3w^2}{4\omega_1\omega_2}-\frac{\eta w}{\omega_1\omega_2}+\frac{w(\omega_1+\omega_2)}{2\omega_1\omega_2}}\bigg),
\end{split}\end{equation}
with $w\in \bC^*\setminus \bR_{<0}$ and $\eta\in \bC$. As before, the middle factor is fixed by choosing the principal branch of $\log(w)$. This function is a modification of the Barnes double gamma function obtained by killing the sporadic terms in its asymptotics as $w\to \infty$.

\begin{prop}
\label{mod}
The function $F(w,\eta \b \omega_1,\omega_2)$ has the following properties:
\begin{itemize}
\item[(a)] It is a single-valued, meromorphic function of   $w,\omega_1,\omega_2\in \bC^*\setminus \bR_{<0}$ and $\eta\in \bC$ providing that $\Re(\omega_2/\omega_1)>0$. It has no zeroes, and poles  only at the points
\[w+\eta=n_1\omega_1+n_2\omega_2, \qquad (n_1,n_2)\in \bZ_{\leq 0}^2.\]

\item[(b)] It satisfies the symmetry relation
\[F(w,\eta \b \omega_1,\omega_2)=F(w,\eta \b \omega_2,\omega_1),\]and  the homogeneity relation\[F(\lambda w,{\lambda \eta} \b \lambda \omega_1,\lambda \omega_2)=F(w,\eta \b \omega_1,\omega_2),\]
valid for $\lambda\in \bC^*$ such that $\lambda w, \lambda\omega_i \in \bC^*\setminus\bR_{<0}$.
\smallskip

\item[(c)] It satisfies a  difference relation
\[\frac{F(w,{\eta+\omega_2}\b \omega_1,\omega_2)}{F(w,\eta \b \omega_1,\omega_2)}=\Lambda(w,\eta\b \omega_1)^{-1}.\]

\item[(d)] Consider fixed $\omega_1,\omega_2\in \bC^*$ and $\eta\in \bC$ and assume that $\Re(\omega_i)>0$. Then there is an asymptotic expansion 
	\[\log F(w,\eta \b \omega_1,\omega_2) \sim
\sum_{k\geq 1}\frac{(-1)^k\cdot B_{2,k+2}(\eta\b \omega_1,\omega_2)}{k(k+1)(k+2)} \cdot w^{-k},
	\]
valid as  $|w|\to \infty$ in  any closed subsector of the half-plane $\Re(w)>0$.
\end{itemize}
\end{prop}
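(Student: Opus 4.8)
The plan is to treat (a)--(c) as essentially formal consequences of the corresponding properties of the Barnes function $\Gamma_2$ recorded in Section \ref{sec_barnes}, together with elementary manipulations of the polynomial $B_{2,2}$ and the rational correction $g$, and to reserve the real work for the asymptotic statement (d). The organising observation is that the two correction factors $\exp(\half B_{2,2}(w+\eta\b\omega_1,\omega_2)\log w)$ and $\exp(g(w,\eta\b\omega_1,\omega_2))$ are holomorphic and nowhere vanishing on $w\in\bC^*\setminus\bR_{<0}$ once the principal branch of $\log w$ is fixed, so that all the analytic structure of $F$ is carried by $\Gamma_2(w+\eta\b\omega_1,\omega_2)$.

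For (a) I would simply note that $\Gamma_2(x\b\omega_1,\omega_2)$ has no zeroes and has poles precisely at $x=-\sum_i m_i\omega_i$ with $m_i\in\bZ_{\geq0}$ (Section \ref{sec_barnes}); substituting $x=w+\eta$ and setting $n_i=-m_i$ gives the stated pole set, and single-valuedness is inherited from that of $\Gamma_2$ and of $\log w$. For (b), the symmetry is immediate because each of $\Gamma_2$, $B_{2,2}$ and $g$ is visibly symmetric in $\omega_1,\omega_2$. For the homogeneity I would combine three facts: the homogeneity \eqref{homogeneity} of $\Gamma_2$, which with $N=2$ produces the factor $\exp(-\half B_{2,2}(w+\eta\b\omega_1,\omega_2)\log\lambda)$; the top-degree homogeneity \eqref{homoBnk} of $B_{2,2}$ (where $\lambda^{k-N}=\lambda^0=1$), which makes $B_{2,2}$ invariant under simultaneous scaling and turns the power factor's $\log(\lambda w)=\log\lambda+\log w$ into exactly the compensating factor $\exp(+\half B_{2,2}\log\lambda)$; and the scale invariance of $g$, each of whose terms is a ratio of two homogeneous degree-two expressions. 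The three contributions cancel, modulo the usual care with branches of $\log$.

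For (c) I would pass to logarithms and add up three difference computations. The difference equation for $\zeta_2$ quoted in Section \ref{sec_barnes}, differentiated in $s$ at $s=0$, gives $\Gamma_2(x+\omega_2\b\omega_1,\omega_2)/\Gamma_2(x\b\omega_1,\omega_2)=\Gamma_1(x\b\omega_1)^{-1}$ with $x=w+\eta$; the Bernoulli difference relation gives $B_{2,2}(x+\omega_2\b\omega_1,\omega_2)-B_{2,2}(x\b\omega_1,\omega_2)=2B_{1,1}(x\b\omega_1)$, so the power factor contributes $\exp(B_{1,1}(w+\eta\b\omega_1)\log w)$; and the linearity of $g$ in $\eta$ gives $g(w,\eta+\omega_2\b\omega_1,\omega_2)-g(w,\eta\b\omega_1,\omega_2)=-w/\omega_1$. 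Assembling these three factors reproduces precisely $\Gamma_1(w+\eta\b\omega_1)^{-1}\exp(B_{1,1}(w+\eta\b\omega_1)\log w)\exp(-w/\omega_1)$, which by the definition of $\Lambda$ (just before \eqref{dall}) is $\Lambda(w,\eta\b\omega_1)^{-1}$.

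The substantive point is (d), and the main obstacle is to show that the two correction factors cancel all the ``sporadic'' (non-decaying) terms in the large-$w$ expansion of $\log\Gamma_2(w+\eta\b\omega_1,\omega_2)$, leaving only the stated tail. I would start from the complete Stirling-type asymptotic expansion of $\log\Gamma_2$ in a closed subsector of $\Re(w)>0$ available in \cite{Ruj2,JM,FR}, whose non-decaying part is a quadratic polynomial in $w$, the term $-\half B_{2,2}(w+\eta\b\omega_1,\omega_2)\log(w+\eta)$, and a constant, and whose tail carries coefficients built from $B_{2,k+2}$. Writing $\log(w+\eta)=\log w+\log(1+\eta/w)$, the power factor exactly kills the $-\half B_{2,2}(w+\eta)\log w$ piece, while re-expanding $-\half B_{2,2}(w+\eta)\log(1+\eta/w)$ in powers of $1/w$ produces further polynomial-in-$w$ and decaying contributions; the polynomial contributions, together with the quadratic polynomial and constant from the Stirling expansion, are precisely what $\exp(g)$ is designed to cancel. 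Verifying that this cancellation is exact---in particular that no constant term survives---is the crux and needs the precise coefficients of the $\Gamma_2$ expansion. Once the sporadic terms are gone, I would identify the surviving tail; as an independent check (and essentially an alternative derivation of its $\eta$-dependence) one can apply the relation (c), which combined with Proposition \ref{lemGamma}(e) and the Bernoulli difference relation $B_{2,k+2}(\eta+\omega_2\b\omega_1,\omega_2)-B_{2,k+2}(\eta\b\omega_1,\omega_2)=(k+2)B_{1,k+1}(\eta\b\omega_1)$ forces the coefficient of $w^{-k}$ to satisfy exactly the difference equation solved by $\tfrac{(-1)^k B_{2,k+2}(\eta\b\omega_1,\omega_2)}{k(k+1)(k+2)}$, matching the claim.
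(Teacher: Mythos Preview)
Your treatment of (a)--(c) is essentially identical to the paper's: the same inputs (pole structure of $\Gamma_2$, the homogeneity \eqref{homogeneity}, the difference relation for $\Gamma_2$, and the Bernoulli identity $B_{2,2}(x+\omega_2)-B_{2,2}(x)=2B_{1,1}(x)$) are combined in the same way.

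For (d) your route diverges from the paper's. You propose to take the standard Stirling expansion of $\log\Gamma_2(y\b\omega_1,\omega_2)$ from the literature (which has tail coefficients $B_{2,k+2}(0\b\omega_1,\omega_2)$ and expansion variable $y=w+\eta$), then re-expand $\log(w+\eta)=\log w+\log(1+\eta/w)$ and $(w+\eta)^{-m}$ in powers of $w^{-1}$, verify by direct computation that the sporadic terms cancel against $g$ and the $\half B_{2,2}\log w$ factor, and finally identify the resulting tail coefficients as $B_{2,k+2}(\eta\b\omega_1,\omega_2)$ via the Appell property of the double Bernoulli polynomials. This is correct and more elementary, though the bookkeeping in collecting the $w^{-k}$ contributions from both the re-expanded tail and the $-\half B_{2,2}(w+\eta)\log(1+\eta/w)$ piece is somewhat involved. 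The paper instead proves a ``second Stirling formula'' for $\log\Gamma_N(x+\delta\b\underline a)$ from scratch in the Appendix: it uses Spreafico's Lerch-type formula to express $\log\Gamma_N(\delta)-\log\Gamma_N(x+\delta)$ via a Weierstrass product, rewrites the relevant piece as a contour integral of $\tfrac{\pi x^s}{s\sin\pi s}\,\zeta_N(s,\delta\b\underline a)$, and deforms the contour so that the residues at negative integers yield the tail coefficients $B_{N,N+k}(\delta\b\underline a)$ directly through the special values of $\zeta_N$; the polynomial--logarithmic part is then fixed by comparison with the standard expansion. The paper's method is heavier in machinery but produces the $\delta$-shifted coefficients in one stroke and works uniformly for all $N$; your approach avoids the zeta/contour analysis entirely at the cost of a hand computation specific to $N=2$. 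One caution: your ``independent check'' via (c) and Proposition \ref{lemGamma}(e) only pins down the $\eta$-dependence of the coefficients up to an $\omega_2$-periodic ambiguity, so it genuinely is a check rather than an alternative derivation.
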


\begin{proof}
The single-valuedness of $F$ is a consequence of the definition for $\Re(a_i)>0$, and the way we have analytically continued to the domain $a_i\in \bC^*\setminus\bR_{<0}$ and $\Re(a_2/a_1)>0$. The claim about the zeroes and poles follows  from the corresponding properties of the double gamma function which can be found in \cite[Appendix A]{JM}.
The symmetry in $\omega_1,\omega_2$ is immediate from the definition of the double gamma function. The homogeneity of $F(w,\eta\b\omega_1,\omega_2)$ is a consequence of \eqref{homogeneity}.

For part (c) start with the reflection identity for the double gamma function
\[\frac{\Gamma_2(x+\omega_2 \b \omega_1,\omega_2)}{\Gamma_2(x \b \omega_1,\omega_2)}=\Gamma_1(x\b a_1)^{-1},\]
which can be  found in \cite[Appendix A]{JM} under the assumption $\Re (\omega_i)>0$. The result follows by analytic continuation, and  the identity
\begin{equation*}
	\label{der}B_{2,2}(x+\omega_2 \b \omega_1,\omega_2)-B_{2,2}(x \b \omega_1,\omega_2)=2B_{1,1}(x\b\omega_1).
\end{equation*}
The proof of part (d) can be found in the Appendix as a consequence of a more general statement about multiple gamma functions $\Gamma_N(x+\delta\b\underline{a})$.
\end{proof}

%%%%%%%%%%%%%%%%%%%%%%%%%%%%%%%%%%%%%%%%%%%%%%%%%
%%%%%%%%%%%%%%%%%%%%%%%%%%%%%%%%%%%%%%%%%%%%%%%%%
%%%%%%%%%%%%%%%%%%%%%%%%%%%%%%%%%%%%%%%%%%%%%%%%%
%%%%%%%%%%%%%%%%%%%%%%%%%%%%%%%%%%%%%%%%%%%%%%%%%

\section{The quantum Riemann-Hilbert problem for doubled A$_1$ }\label{sec_statement}

In this section we describe the refined BPS structure associated to the double of the  A$_1$ quiver, and show how it defines a rigorous quantum Riemann-Hilbert problem taking values in the group of automorphisms of an extension of the quantum torus algebra. We then give a solution to this problem using the special functions  of Section \ref{sec_special_funct}.

\subsection{The doubled A$_1$ example}
\label{se}

Let $(\Gamma,Z,\Omega)$ be a refined BPS structure as defined in the introduction. The corresponding quantum torus algebra is the non-commutative ring \begin{equation}
\label{alg}\bC_{q}[\bT]=\bigoplus_{\gamma\in \Gamma} \bC\big[\bL^{\pm \half}\big]\cdot x_\gamma,\qquad x_{\gamma_1}*x_{\gamma_2}= \bL^{\half \bra\gamma_1,\gamma_2\ket} \cdot x_{\gamma_1+\gamma_2}.\end{equation}
It is an algebra over the ring of Laurent polynomials  $\bC\big[\bL^{\pm \half}\big]$.
The specialisations at $\bL^{\half}=\pm 1$ are the commutative algebras
\begin{equation*}
\bC[\bT_\pm]=\bigoplus_{\gamma\in\Gamma} \bC\cdot x_\gamma, \qquad x_{\gamma_1}* x_{\gamma_2}=(\pm 1)^{\<\gamma_1,\gamma_2\>}\cdot x_{\gamma_1+\gamma_2},\end{equation*}
which are the rings of algebraic functions on the varieties
\begin{equation}
\label{oldy}\bT_\pm =\Big\{\xi\colon \Gamma\to \bC^* \b \xi(\gamma_1+\gamma_2)=(\pm 1)^{\langle\gamma_1,\gamma_2\rangle}\cdot \xi(\gamma_1)\cdot \xi(\gamma_2)\Big\}\isom (\bC^*)^2.\end{equation}
These are the algebraic torus $\bT_+$ and the twisted torus $\bT_-$ considered in \cite[Section 2]{RHDT}.

In this section we shall consider the  refined BPS structures $(\Gamma,Z,\Omega)$ defined in  Example \ref{mineral}. Recall that they consist of the following data:
\begin{itemize}
\item[(a)] the lattice $\Gamma\isom \bZ^{\oplus 2}$ and skew-symmetric form $\<-,-\>$ are
\[\Gamma=\bZ\cdot \alpha\oplus \bZ\cdot \alpha^\dual, \qquad \<\alpha^\dual,\alpha\>=1.\]

\item[(b)] the group homomorphism $Z\colon \Gamma\to \bC$ is determined by \[z=Z(\alpha)\in \bC^*, \qquad  Z(\alpha^\vee)=0.\]

\item[(c)] the only non-zero refined BPS invariants are $\Omega(\pm\alpha)=1$. 
\end{itemize}

\begin{remark}\label{physics}From a mathematical point-of-view, this data arises from the refined BPS structure defined by the Donaldson-Thomas theory of the A$_1$ quiver by a formal doubling procedure \cite[{Section 2.6}]{KS}. In physical terms it corresponds \cite[Section 4]{GMN1} to the $U(1)$ gauge theory whose charge lattice $\Gamma$ is spanned by \lq\lq electric\rq\rq\ and \lq\lq magnetic\rq\rq\ generators $\gamma_e,\gamma_m$ satisfying $\<\gamma_m,\gamma_e\>=1$, and whose only nonzero BPS invariants are $\Omega(\pm \gamma_e)=1$.\end{remark}

\subsection{Quadratic refinement}
\label{quad}

As in the introduction it will be convenient to define some alternative generators for the quantum torus by introducing some signs.  This is fiddly but really just a matter of convention, and can safely be ignored at a first reading.

A quadratic refinement of the form $\<-,-\>$ is a point of the finite subset
\[\Big\{\sigma\colon \Gamma\to \{\pm 1\} \b \sigma(\gamma_1+\gamma_2)=(-1)^{\langle\gamma_1,\gamma_2\rangle}\cdot \sigma(\gamma_1)\cdot \sigma(\gamma_2)\Big\}\subset \bT_-.\]
Such a point $\sigma\in \bT_-$ defines an involution of the quantum torus algebra
\begin{equation}
\label{nhm}D\colon \bC_q[\bT]\to \bC_q[\bT], \qquad \bL^{\half}\mapsto -\bL^{\half}, \quad x_\gamma\mapsto \sigma(\gamma)\cdot x_\gamma.\end{equation}
Note that this automorphism exchanges the two commutative limits $\bL^{\half}\to \pm 1$ considered above, and therefore induces an isomorphism $\bT_+\isom \bT_-$. 

For the doubled A$_1$ refined BPS structure we are considering, an  example of such a quadratic refinement can be defined by setting
\begin{equation}
\label{standard}\sigma(m\alpha+n\alpha^\vee)=(-1)^{m(n+1)}.\end{equation}
Although  this  definition looks rather arbitrary at first sight, this quadratic refinement should in fact be viewed as being canonical (see the discussion in \cite[Section 7.7]{GMN2}): it is uniquely defined by the property that $\Omega(\gamma)\neq 0$ implies  $ \sigma(\gamma)=-1$.

We would now like to compose the quantum Riemann-Hilbert by the involution $D$ so as to obtain nicer formulae for its solution.  To express this in a  down-to-earth manner, we introduce new variables\footnote{The choice of notation  here is a little unfortunate since often in the literature one finds  $q^{\half}=\bL^{\half}$ instead. } 
 \[y_\gamma:=\sigma(\gamma)\cdot x_\gamma, \qquad q^{\half}:=-\bL^{\half}.\]
 Then the quantum torus is
 \[\bC_{q}[\bT]=\bigoplus_{\gamma\in \Gamma} \bC\big[q^{\pm \half}\big]\cdot y_\gamma,\qquad y_{\gamma_1}*y_{\gamma_2}= q^{\half \bra\gamma_1,\gamma_2\ket} \cdot y_{\gamma_1+\gamma_2}.\]
 This change of variables has no effect on the form of the heuristic quantum Riemann-Hilbert problem described in the Introduction, and  at first sight looks completely trivial. However, it becomes non-trivial when we pass to the extended quantum torus.

\subsection{Extended quantum torus algebra}

Let us recall from the introduction the definition of the extended quantum torus algebra\begin{equation}
\label{grading}\widehat{\bC_q[\bT]}=\bigoplus_{n\in \bZ} \Mer(\cH\times \bC) \cdot y_{n\alpha^\dual},\end{equation}
where $\Mer(\cH\times \bC)$ denotes the field of meromorphic functions $f(\tau,\theta)$ on the product of the upper half-plane $\cH$ with the complex plane $\bC$, and 
the product is
\begin{equation}
\label{mult_hat}\Big(f_1(\tau,\theta)\cdot y_{n_1\alpha^\dual}\Big) *\Big( f_2(\tau,\theta)\cdot y_{n_2\alpha^\dual}\Big)={ f_1(\tau,\theta)\cdot f_2(\tau,\theta+n_1\tau)}\cdot y_{(n_1+n_2)\alpha^\dual}.\end{equation}
We also consider the commutative subalgebra
\begin{equation}\label{comm2}\widehat{\bC_q[\bT]}_0=\Mer(\cH\times \bC)\cdot 1 \subset \widehat{\bC_q[\bT]}.\end{equation}

\begin{lemma}
\label{emb}
There is an injective ring homomorphism
$I\colon \bC_q[\bT]\into \widehat{\bC_q[\bT]}$ defined by \[q^{\frac{k}{2}} \cdot y_{m\alpha+n\alpha^\dual}\mapsto \exp\big( \pi i  (k+mn)\tau+2\pi i m\theta\big)\cdot y_{n\alpha^\dual}.\]
\end{lemma}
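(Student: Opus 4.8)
The plan is to regard $\bC_q[\bT]$ as a $\bC$-vector space with basis the monomials $q^{k/2}\cdot y_{m\alpha+n\alpha^\vee}$ for $(k,m,n)\in\bZ^3$, and to define $I$ $\bC$-linearly by the stated formula. Defined on a basis, $I$ is automatically a well-defined $\bC$-linear map, and it sends $1=q^0 y_0$ to $\exp(0)\cdot y_0=1$, the unit of $\widehat{\bC_q[\bT]}$. It then remains to verify two things: that $I$ is multiplicative, and that it is injective.

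For multiplicativity I would first record the skew form on $\Gamma$, namely
\[\langle m_1\alpha+n_1\alpha^\vee,\, m_2\alpha+n_2\alpha^\vee\rangle = n_1 m_2 - m_1 n_2,\]
which follows from $\langle\alpha^\vee,\alpha\rangle=1$. Writing $a = q^{k_1/2} y_{\gamma_1}$ and $b=q^{k_2/2} y_{\gamma_2}$ with $\gamma_i = m_i\alpha + n_i\alpha^\vee$, the product in $\bC_q[\bT]$ is $a*b = q^{(k_1+k_2+\langle\gamma_1,\gamma_2\rangle)/2}\, y_{\gamma_1+\gamma_2}$, so applying $I$ yields an exponential whose coefficient of $\pi i\tau$ is $k_1+k_2+\langle\gamma_1,\gamma_2\rangle + (m_1+m_2)(n_1+n_2)$. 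On the other side I would compute $I(a)*I(b)$ using the twisted product \eqref{mult_hat}: the only non-trivial feature is the substitution $\theta\mapsto\theta+n_1\tau$ in the second factor, which multiplies $I(b)$ by $\exp(2\pi i m_2 n_1\tau)$. Expanding and collecting the coefficients of $\tau$ and $\theta$, both sides reduce to
\[\exp\!\big(\pi i(k_1+k_2+m_1 n_1 + m_2 n_2 + 2 m_2 n_1)\tau + 2\pi i(m_1+m_2)\theta\big)\cdot y_{(n_1+n_2)\alpha^\vee},\]
so $I(a*b)=I(a)*I(b)$. The conceptual content to emphasise here is that the $\theta$-shift of the extended product is precisely what encodes the quantum commutation factor $q^{\half\langle\gamma_1,\gamma_2\rangle}$: the term $2\pi i m_2 n_1\tau$ coming from the shift, together with the diagonal terms $m_i n_i$, reproduces the skew form.

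For injectivity I would use that $I$ is graded: it carries the basis monomial $q^{k/2} y_{m\alpha+n\alpha^\vee}$ into the $n$-th summand $\Mer(\cH\times\bC)\cdot y_{n\alpha^\vee}$ of \eqref{grading}. Hence a nonzero element of $\ker I$ would give, for some fixed $n$, a nontrivial $\bC$-linear relation among the functions $\exp(\pi i(k+mn)\tau + 2\pi i m\theta)$ as $(k,m)$ ranges over $\bZ^2$. For fixed $n$ the assignment $(k,m)\mapsto(\pi i(k+mn),\,2\pi i m)$ is injective, so these are pairwise distinct exponential functions on $\cH\times\bC$, and distinct exponentials are $\bC$-linearly independent (for instance by restricting to a generic complex line and invoking linear independence of characters). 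Thus $\ker I = 0$.

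The computation is elementary throughout; the only point requiring care—and the real content of the lemma—is the multiplicativity check, specifically matching the half-integer power $\half\langle\gamma_1,\gamma_2\rangle$ from the quantum torus against the $\theta\mapsto\theta+n_1\tau$ twist, since this is exactly where the noncommutativity of $\bC_q[\bT]$ is transported into the extended algebra.
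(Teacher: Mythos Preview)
Your proof is correct and follows the same approach as the paper: the multiplicativity check is the computation the paper alludes to by ``the relations in \eqref{alg} are easily checked,'' and your injectivity argument via linear independence of distinct exponentials is exactly the paper's reasoning. Your version is simply more explicit, spelling out the algebra that the paper leaves to the reader.
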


\begin{proof}
The relations in \eqref{alg} are easily checked. The fact that the resulting ring homomorphism is injective follows from the fact that for any distinct complex numbers $a_1,\cdots, a_n\in \bC$, the exponential functions $f_i(t)=\exp(a_i t)$ are linearly independent over $\bC$. \end{proof}

We often identify elements of $\bC_q[\bT]$ with their images under the embedding $I$. Note, in particular, that
\begin{equation}
\label{ident}I(q^{\half})=\exp(\pi i \tau)\cdot 1, \qquad I(y_\alpha)=\exp(2\pi i \theta)\cdot 1, \qquad I(y_\alpha^{\vee})=y_{\alpha^\vee}.\end{equation}
The group homomorphism $Z\colon \Gamma\to \bC$ defines a family of automorphisms
\[\epsilon_Z(t)\in \Aut \bC_q[\bT], \qquad \epsilon_Z(t)(y_{\gamma})=e^{Z(\gamma)/t}\cdot y_\gamma, \qquad \gamma\in \Gamma,\ t\in\bC^*,\]
which lift the  rotations of the tori $\bT_\pm $ obtained by exponentiating the flows of the invariant vector fields corresponding to $Z/t$. These automorphisms extend to a family of automorphisms of $\widehat{\bC_q[\bT]}$ defined by
\[\epsilon_Z(t)\Big(f(\tau,\theta)\cdot y_{n\alpha^\vee}\Big)=f\Big(\tau,\theta+\frac{z}{2\pi i t}\Big)\cdot y_{n\alpha^\vee},\]
where we used the assumption  that $Z(\alpha^\vee)=0$.

\subsection{Quantum dilogarithm}
The quantum dilogarithm  function is defined by the infinite product 
\[\bE_q(x)=\prod_{k\geq 0} (1-q^k x).\]
Under the assumption $|q|<1$ the product converges absolutely and defines a nowhere-vanishing analytic function of $x\in \bC$. Assuming $|x|<1$ we can expand \cite[Section 1.3]{FG} as
\[\bE_q(x)^{-1}=\sum_{n\geq 0} \frac{ x^n }{(1-q)\cdots (1-q^n)} =\exp_q \Big(\frac{x}{1-q}\Big),\]
where the quantum exponential is
\[\exp_q(x)=\sum_{n\geq 0} \frac{x^n}{[n]_q !}, \qquad [n]_q!=[n]_q\cdot [n-1]_q \cdots [1]_q, \qquad [k]_q=\frac{q^k-1}{q-1}.\]

\begin{remark}
\label{convent}
There are different conventions for this function in the literature. For example,  Kontsevich and Soibelman \cite[Section 6.4]{KS} define \[\bE^{KS}_{q^{\frac{1}{2}}}(x)=\bE_q(-q^{\frac{1}{2}}x)^{-1}=1+\sum_{n\geq 1}\frac{q^{\frac{n^2}{2}}\cdot x^n}{(q^n-1)\cdots (q^n-q^{n-1})}.\]
This is also the convention used by Fock and Goncharov \cite[Section 1.3]{FG}, although they refer to this function as the q-exponential, reserving the term quantum dilogarithm for a different function, often called the Fadeev quantum dilogarithm, which is essentially the double sine function. 
\end{remark}

We consider the elements
\begin{equation*}
	\DT_q(\ell_\pm)=\bE_{e^{2\pi i \tau}}\big(\!  -\! e^{\pi i \tau\pm 2\pi i \theta}\big)^{-1} \cdot 1\in \widehat{\bC_q[\bT]}_0,\end{equation*}
which are clearly invertible, and the corresponding automorphisms
\[\bS_q(\ell_\pm)=\Ad_{\DT_q(\ell_\pm)}\in \Aut  \widehat{\bC_q[\bT]}.\]
We can compute these automorphisms explicitly as follows.

\begin{lemma}
\label{oneone}
The automorphisms $\bS_q(\ell_\pm)$
act trivially on the subalgebra \eqref{comm2} and satisfy 
\[\bS_q(\ell_\pm)\colon y_{\alpha^\vee}\mapsto  (1+q^{\pm \frac{1}{2}} y_{\pm \alpha})^{\mp 1}*y_{\alpha^\vee}.\]\end{lemma}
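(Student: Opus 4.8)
The plan is to compute the conjugation $\Ad_{\DT_q(\ell_\pm)}$ directly from the product \eqref{mult_hat}, reducing everything to a telescoping identity for the quantum dilogarithm. Write $D_\pm:=\DT_q(\ell_\pm)=\phi_\pm(\tau,\theta)\cdot 1$, where $\phi_\pm(\tau,\theta)=\bE_{e^{2\pi i\tau}}\big(\!-\!e^{\pi i\tau\pm 2\pi i\theta}\big)^{-1}$; this element lies in the $n=0$ summand \eqref{comm2}, which is legitimate and invertible since $\tau\in\cH$ forces $|e^{2\pi i\tau}|<1$ and the defining product of $\bE_q$ converges.

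For the first claim I would note that the product \eqref{mult_hat} restricted to $n_1=n_2=0$ is simply pointwise multiplication of functions, so $\widehat{\bC_q[\bT]}_0$ is commutative. Since $D_\pm$ belongs to this subalgebra, it commutes with every element of it, and hence $\Ad_{D_\pm}$ fixes $\widehat{\bC_q[\bT]}_0$ pointwise.

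For the action on $y_{\alpha^\vee}=1\cdot y_{\alpha^\vee}$ (an element of the $n=1$ component) I would apply \eqref{mult_hat} twice. Because $D_\pm$ sits in degree $n=0$, the first product $D_\pm * y_{\alpha^\vee}$ introduces no argument shift and returns $\phi_\pm(\tau,\theta)\cdot y_{\alpha^\vee}$; in the second product $\big(\phi_\pm\cdot y_{\alpha^\vee}\big)*D_\pm^{-1}$ the left factor has degree $n_1=1$, so its coefficient $\phi_\pm^{-1}$ gets evaluated at $\theta+\tau$. The net effect is
\[\Ad_{D_\pm}(y_{\alpha^\vee})=\frac{\phi_\pm(\tau,\theta)}{\phi_\pm(\tau,\theta+\tau)}\cdot y_{\alpha^\vee}.\]
It then remains to evaluate this ratio. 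Setting $q=e^{2\pi i\tau}$ and $x=-e^{\pi i\tau\pm 2\pi i\theta}$, the substitution $\theta\mapsto\theta+\tau$ multiplies $e^{\pm 2\pi i\theta}$ by $q^{\pm 1}$ and hence sends $x$ to $q^{\pm 1}x$. I would finish using the elementary functional equation $\bE_q(qx)/\bE_q(x)=(1-x)^{-1}$, immediate from telescoping the product $\bE_q(x)=\prod_{k\geq 0}(1-q^k x)$. Unwinding the identifications $e^{2\pi i\theta}=y_\alpha$ and $e^{\pi i\tau}=q^{\frac12}$ from \eqref{ident} then yields the factor $(1+q^{1/2}y_\alpha)^{-1}$ in the $+$ case and $(1+q^{-1/2}y_{-\alpha})^{+1}$ in the $-$ case, which together are exactly $(1+q^{\pm1/2}y_{\pm\alpha})^{\mp 1}$.

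The computation is entirely elementary, and the main point requiring care is not conceptual but the sign-and-exponent bookkeeping: one must track that it is the \emph{second} product in the conjugation that produces the shift $\theta\mapsto\theta+\tau$, and verify that the two opposite rescalings $x\mapsto qx$ versus $x\mapsto q^{-1}x$ feed through the functional equation so as to reproduce the reciprocal exponents $\mp1$ correctly. This, rather than any genuine difficulty, is the place where an error could most easily creep in.
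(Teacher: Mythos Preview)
Your proof is correct and follows essentially the same route as the paper's own argument: both reduce the conjugation to the ratio $\phi_\pm(\tau,\theta)/\phi_\pm(\tau,\theta+\tau)$ via the product rule \eqref{mult_hat}, and then evaluate that ratio using the telescoping functional equation $\bE_q(x)=(1-x)\,\bE_q(qx)$. Your write-up is in fact slightly cleaner in isolating the shift $\theta\mapsto\theta+\tau$ before plugging in the exponentials, but the content is the same.
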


\begin{proof}
The automorphisms $\bS_q(\ell_\pm)$ act trivially on the subalgebra $\widehat{\bC_q[\bT]}_0$ because this subalgebra is commutative.
The definition of the quantum exponential shows that it satisfies the difference relation
\[\bE_q(x)\cdot \bE_q(qx)^{-1}=1-x.\]
Using the definitions we therefore have
\[\bE_q(-q^{\frac{1}{2}}y_{\pm \alpha})^{-1}* y_{\alpha^\dual}*\bE_q(-q^{\frac{1}{2}} y_{\pm \alpha})=\bE_{e^{2\pi i \tau}}(-e^{\pm 2\pi i \theta+\pi i \tau})^{-1}\cdot 1  * y_{\alpha^\dual} * \bE_{e^{2\pi i \tau}} (-e^{\pm 2\pi i \theta+\pi i \tau})\cdot 1\]\[=\bE_{e^{2\pi i \tau}} (-e^{\pm 2\pi i \theta+\pi i \tau})^{-1}\cdot \bE_{e^{2\pi i \tau}} (-e^{\pm 2\pi i \theta+(1\pm 2)\pi i \tau})\cdot y_{\alpha^\vee}=(1+e^{\pm 2\pi i \theta\pm\pi i \tau})^{\mp 1} \cdot y_{\alpha^\vee},\]
which gives the stated result under the identifications \eqref{ident}. \end{proof}

\subsection{Maps into the extended quantum torus}
Before stating the quantum Riemann-Hilbert problem we need to make a few definitions concerning holomorphic maps into the extended quantum torus algebra, and the limiting behaviour of such maps. 

\begin{definitions}
Let $D\subset \bC$ be a domain.

\begin{itemize}
\item[(a)]By a meromorphic map
$f\colon  D\to \widehat{\bC_q[\bT]} $
 we mean a finite sum of the form
 \[f(t)=\sum_n f_n(\tau,\theta,t)\cdot y_{n\alpha^\vee},\]
such that each function $f_n(\tau,\theta,t)$ is meromorphic on $\cH\times\bC\times D$.\smallskip

\item[(b)]Given a meromorphic map $f(t)$ as in (a), and a point  $t_0\in \bar{D}$, we say that \[f(t)\to g=\sum_n g_n(\tau,\theta)\cdot y_{n\alpha^\vee}\in\widehat{\bC_q[\bT]}\]
as $t\to t_0$, if for each $n\in \bZ$, and each $(\tau,\theta)\in \cH\times\bC$,  one has \[f_n(\tau,\theta,t)\to g_n(\tau,\theta)\text{ as }t\to t_0.\] 

\item[(c)] Suppose  the domain $D$ is unbounded.  We say that a meromorphic map $f(t)$ as in (a) has bounded growth at infinity if for all $(\tau,\theta)\in \cH\times\bC$, and all $n\in \bZ$, there is a $k>0$ with
\[|t|^{-k} < |f_n(\tau,\theta,t)|<|t|^k\text{ as } |t|\to \infty.\]

\item[(d)]
  We say that a map
 $\phi\colon D\to \Aut \widehat{\bC_q[\bT]}$
 is meromorphic if for any element $a\in  \widehat{\bC_q[\bT]}$ the map
\[\operatorname{eval}_a(\phi)\colon D\to \widehat{\bC_q[\bT]},\qquad \operatorname{eval}_a(\phi)(t)=\phi(t)(a)\]
obtained by applying  $\phi(t)\in  \Aut \widehat{\bC_q[\bT]}$ to the element $a\in  \widehat{\bC_q[\bT]}$ is meromorphic in the sense of (a).\smallskip

\item[(e)] We similarly extend definitions (b) and (c) to the case of automorphisms by evaluating on arbitrary elements. Thus, for example, given a meromorphic map $\phi(t)$ as in (d), and a point $t_0\in \bar{D}$, we  say that \[\phi(t)\to \psi\in \Aut \widehat{\bC_q[\bT]}\text{ as }t\to t_0,\] if for every element $a\in  \widehat{\bC_q[\bT]}$, one has $\operatorname{eval}_a(\phi)(t)\to \operatorname{eval}_a(\psi)$ in the sense of (b). Similarly for bounded growth at infinity.
\end{itemize}
\end{definitions}

\subsection{Rigorous quantum Riemann-Hilbert problem}\label{sec_RH}
We can now state a rigorous version of the quantum Riemann-Hilbert problem for the doubled A$_1$ refined BPS structure. As in \cite[Section 4]{RHDT} it is best to formulate the problem in terms of maps on half-planes in $\bC$ centered on non-active rays (see \cite[Remark 4.6 (ii)]{RHDT}). Glueing these together exactly as in \cite[Section 5.1]{RHDT} we arrive at the following formulation.

\begin{problem}\label{probl_trasl}We look for meromorphic maps
\[\Phi_\pm \colon \bC^*\setminus  i\ell_{\pm}   \to  \Aut \widehat{\bC_q[\bT]},\]
satisfying the three properties
\begin{itemize}
\item[(qRH1)] There are relations 
	\begin{equation*}
	\Phi_+(t)=\begin{cases}  \Phi_-(t)\circ \bS_q(\ell_+) 
		&\mbox{ if }\Re(t/z)>0,\\ \Phi_-(t)\circ \bS_q(\ell_-)^{-1}   &\mbox{ if }\Re(t/z)<0.\end{cases}
	\end{equation*}

\item[(qRH2)] As $t\to 0$ in $\bC^*\setminus i\ell_{\pm}$ we have $\Phi_{\pm}(t) \circ \epsilon_Z(t)\to \id\in\Aut \widehat{\bC_q[\bT]}$.
\smallskip

\item[(qRH3)] The map $\Phi_{\pm}(t)$ has bounded growth at infinity.
\end{itemize}
\end{problem}

Written in terms of the equivalent data
\[\Psi_\pm \colon \bC^*\setminus  i\ell_{\pm}   \to  \Aut \widehat{\bC_q[\bT]},\qquad \Psi_\pm(t)= \Phi_\pm(t)\circ \epsilon_Z(t),\]
the condition (qRH1) becomes
\begin{equation*}
\Psi_+(t)=\begin{cases}  \Psi_-(t)\circ \tilde{\bS}_q( \ell_+)  &\mbox{ if }\Re(t/z)>0,\\  \Psi_-(t)  \circ \tilde{\bS}_q( \ell_-)^{-1} &\mbox{ if }\Re(t/z)<0,\end{cases}
\end{equation*}
where we define automorphisms
\[\tilde{\bS}_q(\ell_\pm)=\epsilon_Z(-t)\circ \bS_q(\ell_\pm)\circ \epsilon_Z(t)=\Ad_{\epsilon_Z(-t)(\bE_q(-q^\frac{1}{2}y_{\pm \alpha})^{-1})}.\]
These automorphisms again act trivially on the subalgebra \eqref{comm2} and  satisfy
\begin{equation}
	\label{lab}\tilde{\bS}_q(\ell_\pm)^{\pm 1}\colon y_{\alpha^\vee}\mapsto (1+q^{\pm \frac{1}{2}} \cdot e^{\mp \frac{z}{t}}\cdot y_{\pm \alpha})^{-1}*y_{\alpha^\vee} .\end{equation}
The conditions (qRH2) and (qRH3) are unchanged.

\subsection{The solution}
\label{sec_soln}

The form of the discontinuites \eqref{lab} makes the problem identical to the classical commutative Riemann-Hilbert problem for $A_1$ studied in \cite{RHuncoupled} after a shift $\theta\mapsto \theta+\half \tau$.  The choice 
of the quadratic refinement produces an additional shift by $\theta\mapsto \theta+\half$. We can therefore give the solution as follows. 

\begin{theorem}\label{prop_pa}
Problem \ref{probl_trasl} has solutions the automorphisms $\Psi_{\pm}(t)$ which act trivially on the subalgebra \eqref{comm2} and satisfy
	\begin{equation}
		\label{soln}		\Psi_\pm (t)\left(y_{\alpha^{\vee}}\right)= \Lambda\left(\pm \frac{z}{2\pi i t}, \frac{1}{2}\mp\Big(\theta+\frac{\tau}{2}\Big) \wb 1\right)^{\pm 1}\cdot y_{\alpha^{\vee}} .\end{equation}\end{theorem}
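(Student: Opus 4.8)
The plan is to verify directly that the automorphisms prescribed by \eqref{soln} solve Problem \ref{probl_trasl}, taking as the main analytic input the properties of the modified gamma function $\Lambda$ collected in Proposition \ref{lemGamma}. The guiding principle is the remark preceding the statement: after the identifications \eqref{ident}, the jump automorphisms $\tilde{\bS}_q(\ell_\pm)^{\pm 1}$ of \eqref{lab} act on $y_{\alpha^\vee}$ exactly as the Stokes factors of the classical $A_1$ problem of \cite{RHuncoupled}, with $\theta$ replaced by $\theta+\tfrac{\tau}{2}$, so the verification is formally parallel to the classical one.

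First I would observe that a map $\Psi_\pm(t)$ of the required form is well defined: since $\widehat{\bC_q[\bT]}$ is generated over the commutative subalgebra \eqref{comm2} by the invertible element $y_{\alpha^\vee}$, with $y_{n\alpha^\vee}=y_{\alpha^\vee}^{*n}$, there is a unique automorphism fixing \eqref{comm2} pointwise and sending $y_{\alpha^\vee}$ to any prescribed unit multiple of itself. Consequently all three conditions (qRH1)--(qRH3) need only be checked on the single element $y_{\alpha^\vee}$.

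Conditions (qRH2) and (qRH3) follow from the asymptotics of $\Lambda$. Writing $w=\pm z/(2\pi i t)$, the excised ray $i\ell_\pm$ corresponds precisely to $w\in\bR_{<0}$, so as $t\to 0$ inside $\bC^*\setminus i\ell_\pm$ the point $w$ tends to infinity within a closed subsector of $\bC^*\setminus\bR_{<0}$; Proposition \ref{lemGamma}(e) then gives $\log\Lambda(w,\cdot\b 1)\to 0$, whence $\Psi_\pm(t)(y_{\alpha^\vee})\to y_{\alpha^\vee}$ and $\Psi_\pm(t)\to\id$, which is (qRH2). As $t\to\infty$ we have $w\to 0$, and the two-sided bound of Proposition \ref{lemGamma}(d) together with $|w|=|z|/(2\pi|t|)$ yields the polynomial bounds demanded by (qRH3).

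The crux of the proof, and the step I expect to be the main obstacle, is the jump relation (qRH1). Setting $\eta_+=\tfrac{1}{2}-(\theta+\tfrac{\tau}{2})$, one has $w_-=-w_+$ and $\eta_-=\tfrac{1}{2}+(\theta+\tfrac{\tau}{2})=1-\eta_+$, so the coefficient of $\Psi_+(t)(y_{\alpha^\vee})$ relative to $\Psi_-(t)(y_{\alpha^\vee})$ is $\Lambda(w_+,\eta_+\b 1)\cdot\Lambda(-w_+,1-\eta_+\b 1)$, which the reflection formula of Proposition \ref{lemGamma}(c) evaluates to $(1-e^{\pm 2\pi i(w_++\eta_+)})^{-1}$. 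The delicate point is to match this against the explicit Stokes factor in \eqref{lab}: one computes $e^{2\pi i(w_++\eta_+)}=-e^{z/t}e^{-2\pi i(\theta+\tau/2)}$ under \eqref{ident}, and must then check that in each of the two regions $\Re(t/z)\gtrless 0$ the correct branch of the reflection formula---governed by the sign of $\Im(w_+)$---is selected. A short computation shows that $\Re(t/z)>0$ forces $\Im(w_+)<0$, for which the reflection formula reproduces exactly the factor $(1+e^{-z/t}e^{2\pi i(\theta+\tau/2)})^{-1}$ appearing in $\tilde{\bS}_q(\ell_+)$; the region $\Re(t/z)<0$ is handled symmetrically. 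Tracking these signs and half-plane conventions consistently with the clockwise sense of crossing is precisely the bookkeeping already carried out for the classical problem in \cite{RHuncoupled}, and with it (qRH1) follows, completing the verification.
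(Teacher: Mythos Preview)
Your proposal is correct and follows essentially the same route as the paper's own proof: reduce to the action on $y_{\alpha^\vee}$, deduce (qRH1) from the reflection identity of Proposition~\ref{lemGamma}(c), and read off (qRH2) and (qRH3) from the asymptotic properties (e) and (d) of $\Lambda$. Your treatment of the half-plane bookkeeping in (qRH1) is in fact more explicit than the paper's terse appeal to Proposition~\ref{lemGamma}(c).
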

	
\begin{proof}
All automorphisms of $\widehat{\bC_q[\bT]}$ being considered act trivially on the subalgebra \eqref{comm2} so it will be enough to consider their action on the generator $y_{\alpha^\vee}$. Using formula \eqref{lab} the jumping condition (qRH1)  comes down to the statement that 
\[\Lambda\left( \frac{z}{2\pi i t}, \frac{1}{2}- \Big(\theta+\frac{\tau}{2}\Big) \wb 1\right)=\Lambda\left(-\frac{z}{2\pi i t},  \frac{1}{2}+\Big(\theta+\frac{\tau}{2}\Big) \wb 1\right)^{-1}\cdot \left(1+e^{\pm \pi i\tau}\cdot e^{\mp \frac{z}{t}}\cdot e^{\pm 2\pi i \theta}\right )^{-1},\]
when $\pm \Re(t/z)>0$, which follows from Lemma \ref{lemGamma}(c). Since all elements of the algebra $\widehat{\bC_q[\bT]}$ are polynomials in the element $y_{\alpha^\vee}$ over the subalgebra \eqref{comm2}, to check (qRH2) it is enough to check that
\[\Lambda\left(\pm \frac{z}{2\pi i t}, \frac{1}{2}\mp\Big(\theta+\frac{\tau}{2}\Big) \wb 1\right)\to 1\]
as $t\to 0$ in the  domain $\pm \Re(t/z)>0$, which  follows from Lemma \ref{lemGamma}(f). Similarly the bounded growth condition (qRH3) follows from Lemma \ref{lemGamma}(e).
\end{proof}

We then have  for $n>0$
\[
	\Psi_{\pm}(t)\left(y_{n\alpha^\vee}\right) = \Psi_{\pm}(t)\left(y_{\alpha^\vee}\right)*\dots *\Psi_{\pm}(t)\left(y_{\alpha^\vee}\right) =
	\prod_{j=0}^{n-1} \Lambda\left(\pm \frac{z}{2\pi i t},\frac{1}{2}\mp \Big(\theta+\Big(j+\frac{1}{2}\Big)\Big) \tau\wb 1\right)^{\pm 1}\cdot y_{n\alpha^\vee}
\]
	
\[
	\Psi_\pm (t)\left(y_{-n\alpha^\vee}\right) =  \Psi_\pm (t)\left(y_{n\alpha^\vee}\right)^{-1} 
	= \prod_{j=-n}^{-1} \Lambda\left(\pm \frac{z}{2\pi i t},\frac{1}{2}\mp \Big(\theta+ \Big(j+\frac{1}{2}\Big)\Big) \tau\wb 1\right)^{\mp 1}\cdot y_{-n\alpha^\vee}
	\]
	
\begin{remark}
\label{uniquness}
We  can only make very weak uniqueness statements for the above solution. Suppose we impose the extra condition that the solution $\Psi_\pm(t)$ should preserve the grading
\[\widehat{\bC_q[\bT]}=\bigoplus_{n\in \bZ} \widehat{\bC_q[\bT]}_0\cdot y_{n\alpha^\vee},\]
and act trivially on the zeroth graded piece. Any such solution is determined by
\[\Psi_\pm(t)\colon y_{\alpha^\vee}\mapsto f_\pm(\theta,\tau,z,t)\cdot y_{\alpha^\vee}\]
for some meromorphic functions $f_\pm(\theta,\tau,z,t)$. The transformations \eqref{lab} have poles or zeroes at the point 
\begin{equation}
\label{t}t= \frac{z}{2\pi i \left(n+\theta+\half(1+\tau)\right)} ,\end{equation}
for all integers $n\in \bZ$, so it is inevitable that the functions $f_\pm$ also have poles or zeroes at these points. If we impose the condition that  $f_\pm(\theta,\tau,z,t)$, considered as functions of $t\in \bC^*\setminus\ell_\pm$,  have finitely many poles and zeroes, and that these are simple and occur only at the points \eqref{t}, then 
similar arguments to \cite[{Section\,2}]{RHuncoupled} show that  they must be given by the formula \eqref{soln} up to shifting the variable $\theta$ by an integer.
\end{remark}

\subsection{The adjoint form}
It is interesting to write the  solution of Theorem \ref{prop_pa} in adjoint form as follows.

\begin{theorem}\label{thm_af} For each $z\in \bC^*$ the solution $\Psi_{\pm}(t)$ of Theorem   \ref{prop_pa} can  be expressed as 
	\begin{equation*}
	\Psi_{\pm}(t) = \Ad_{\psi_{\pm}(t)},\qquad\psi_{\pm}: \bC^*\setminus i\ell_{\pm} \to \widehat{\bC_q[\bT]},
	\end{equation*}
where  we define maps\begin{equation}
	\label{easter}\psi_{\pm}(t)=F\Big(\pm \frac{z}{2\pi i t},\frac{1+\tau}{2}\mp \theta\wb1, \tau\Big)^{-1}.\end{equation}
\end{theorem}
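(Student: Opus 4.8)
The plan is to check directly that the automorphism $\Ad_{\psi_\pm(t)}$ agrees with the automorphism $\Psi_\pm(t)$ pinned down by Theorem \ref{prop_pa}. Both are algebra automorphisms of $\widehat{\bC_q[\bT]}$, so it is enough to compare them on a generating set. Note that the element $\psi_\pm(t)$ defined by \eqref{easter} lies in the commutative subalgebra $\widehat{\bC_q[\bT]}_0$, being a scalar function times $1$. Moreover the whole algebra is generated over $\widehat{\bC_q[\bT]}_0$ by $y_{\alpha^\vee}$ and its inverse $y_{-\alpha^\vee}=y_{\alpha^\vee}^{-1}$, since \eqref{mult_hat} gives $y_{\alpha^\vee}^{*n}=y_{n\alpha^\vee}$ and $(f\cdot 1)*y_{n\alpha^\vee}=f\cdot y_{n\alpha^\vee}$. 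Hence it suffices to verify that $\Ad_{\psi_\pm(t)}$ fixes $\widehat{\bC_q[\bT]}_0$ pointwise and sends $y_{\alpha^\vee}$ to the element prescribed by \eqref{soln}.

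First I would record the action of conjugation by an element of the commutative subalgebra. If $\psi=\psi(\tau,\theta)\cdot 1\in\widehat{\bC_q[\bT]}_0$ then $\psi^{-1}=\psi(\tau,\theta)^{-1}\cdot 1$, and because $\widehat{\bC_q[\bT]}_0$ is commutative, $\Ad_\psi$ fixes it — matching the first requirement of Theorem \ref{prop_pa}. Applying the product rule \eqref{mult_hat} twice then yields
\[\Ad_\psi(y_{\alpha^\vee})=\psi* y_{\alpha^\vee}* \psi^{-1}=\frac{\psi(\tau,\theta)}{\psi(\tau,\theta+\tau)}\cdot y_{\alpha^\vee},\]
where the shift $\theta\mapsto\theta+\tau$ lands on the right-hand factor $\psi^{-1}$ precisely because the element $y_{\alpha^\vee}$ to its left has degree one. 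The problem thus reduces to evaluating the ratio $\psi_\pm(t)(\tau,\theta)/\psi_\pm(t)(\tau,\theta+\tau)$.

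Next I would substitute the explicit form \eqref{easter}, $\psi_\pm(t)=F\big(\pm\frac{z}{2\pi i t},\frac{1+\tau}{2}\mp\theta\b 1,\tau\big)^{-1}$. The key observation is that the replacement $\theta\mapsto\theta+\tau$ shifts the second argument of $F$ by $\eta\mapsto\eta\mp\tau$, that is, by $\mp\omega_2$ with $\omega_2=\tau$. The ratio is therefore a single application of the difference relation Proposition \ref{mod}(c), namely $F(w,\eta+\tau\b 1,\tau)/F(w,\eta\b 1,\tau)=\Lambda(w,\eta\b 1)^{-1}$. Doing the bookkeeping, in the $+$ case the shifted argument becomes $\eta-\tau=\frac{1}{2}-(\theta+\frac{\tau}{2})$ and one obtains $\Lambda\big(\frac{z}{2\pi i t},\frac{1}{2}-(\theta+\frac{\tau}{2})\b 1\big)$, while in the $-$ case one obtains directly $\Lambda\big(-\frac{z}{2\pi i t},\frac{1}{2}+(\theta+\frac{\tau}{2})\b 1\big)^{-1}$. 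Both coincide with the right-hand side of \eqref{soln}, so the two automorphisms agree on the generators and hence are equal, which is the assertion.

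The computation is short and essentially mechanical, so I expect the only genuinely delicate points to be elsewhere. The first is the sign bookkeeping in $\eta=\frac{1+\tau}{2}\mp\theta$, chosen exactly so that the shifted argument lands on $\frac{1}{2}\mp(\theta+\frac{\tau}{2})$ and matches \eqref{soln}; a sign slip here would misidentify the solution. The second, which I expect to be the main obstacle, is the analytic status of $F(\,\cdot\,\b 1,\tau)$ for $\tau\in\cH$: although Proposition \ref{mod} is stated under $\Re(\omega_2/\omega_1)>0$, the parameters $1$ and $\tau$ always span a cone of angular width $\arg(\tau)\in(0,\pi)$, strictly less than $\pi$, so the Barnes double gamma function and the difference relation persist by meromorphic continuation across all of $\cH$. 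This is precisely what guarantees $\psi_\pm(t)\in\widehat{\bC_q[\bT]}_0$, and hence that the adjoint description makes sense; I would take care to invoke this continuation explicitly before applying Proposition \ref{mod}(c).
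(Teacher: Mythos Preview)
Your proof is correct and follows essentially the same approach as the paper: compute the conjugation action of an element of $\widehat{\bC_q[\bT]}_0$ on $y_{\alpha^\vee}$ via the multiplication rule \eqref{mult_hat}, and then invoke the difference relation of Proposition~\ref{mod}(c) to identify the resulting ratio with the $\Lambda$-factor in \eqref{soln}. Your version simply spells out the sign bookkeeping and the generator argument more explicitly, and your closing remark about the analytic continuation of $F(\,\cdot\,\b 1,\tau)$ for $\tau\in\cH$ with $\Re(\tau)\leq 0$ is a worthwhile caveat that the paper leaves implicit.
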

\begin{proof}
We recall the multiplication rule \eqref{mult_hat}, which shows that for all $g\in\Mer(\cH\times \bC)$
	\begin{equation*}
	g(\tau ,\theta) * y_{\alpha^{\vee}} * g(\tau,\theta)^{-1} = g(\tau,\theta)g(\tau,\theta+\tau)^{-1}\cdot y_{\alpha^{\vee}}.
	\end{equation*}
The claim then follows from Proposition \ref{mod}\,(c). 
\end{proof}

\begin{remark}The above adjoint form is far from unique. 
To reduce this indeterminacy one could try to lift the quantum Riemann-Hilbert problem to a problem involving maps
\[\psi_\pm \colon \bC^*\setminus i\ell_{\pm}\to \widehat{\bC_q[\bT]},\]
with the jumping conditions
\[\psi_+(t)=\begin{cases} \psi_-(t) * \bE_{e^{2\pi i \tau}}\left(-\exp\left(\pi i \tau+2\pi i \theta-\frac{z}{t}\right)\right)^{-1} &\mbox{ if }\Re(t/z)>0,\\ \psi_-(t)*\bE_{e^{2\pi i \tau}}\left(-\exp\left(\pi i \tau-2\pi i \theta+\frac{z}{t}\right)\right)   &\mbox{ if }\Re(t/z)<0,\end{cases}\]
and appropriate limiting behavior at $t=0$ and $t=\infty$. We do not know whether  this problem has an interesting solution.
\end{remark}

%%%%%%%%%%%%%%%%%%%%%%%%%%%%%%%%%%%%%%%%%%%%%%%%%
%%%%%%%%%%%%%%%%%%%%%%%%%%%%%%%%%%%%%%%%%%%%%%%%%
%%%%%%%%%%%%%%%%%%%%%%%%%%%%%%%%%%%%%%%%%%%%%%%%%
%%%%%%%%%%%%%%%%%%%%%%%%%%%%%%%%%%%%%%%%%%%%%%%%%

\section{Two limits}
\label{sec_limit}

In this section we consider two limits of the solutions to the quantum Riemann-Hilbert problem discussed in the last section corresponding to $\tau\to 0$ and $\tau\to 1$ respectively. We explain how these  relate to the classical Riemann-Hilbert problem  studied in \cite{RHuncoupled,RHDT}.

\subsection{Classical Riemann-Hilbert problem}

We refer to \cite{RHDT} for the definition of a BPS structure and its associated Riemann-Hilbert problem. In this section we will use the term classical when referring to these concepts, to differentiate them from the refined BPS structures and quantum Riemann-Hilbert problems considered  above. Note that a refined BPS structure $(\Gamma,Z,\Omega)$ has an associated classical BPS structure, as defined in \cite{RHDT}, obtained by evaluating the Laurent polynomials $\Omega(\gamma)\in \bQ[\bL^{\pm \half}]$ at  the point $\bL^{\half}=-1$.

In heuristic terms, the classical Riemann-Hilbert problem associated to a classical BPS structure involves piecewise holomorphic maps
\begin{equation*}
\Phi\colon \bC^*\to \Aut(\bT_-),\end{equation*}
with prescribed limiting behaviour at $t=0$ and $t=\infty$ analogous to those imposed in Problem \ref{rough}, and discontinuous jumps
\[\Phi(t)\mapsto \Phi(t)\circ \bS(\ell),\]
given by wall-crossing automorphisms
\[\bS(\ell)^*(x_\beta)= \prod_{Z(\gamma)\in \ell} (1-x_\gamma)^{\<\gamma,\beta\>\cdot \Omega(\gamma)}\cdot x_\beta.\]
Here $\bT_-$ denotes the twisted torus defined in \eqref{oldy}, and the functions $x_\gamma\colon \bT_-\to \bC^*$ are the  twisted characters \cite[Section 2.4]{RHDT}. It is often convenient to consider the equivalent maps
\begin{equation*}
\Psi \colon \bC^*\to \Aut(\bT_-),\qquad \Phi(t)= \Psi(t)\circ \epsilon_Z(t),\end{equation*}
where $\epsilon_Z(t)$ is the translation of $\bT_-$ defined by \[\epsilon_Z(t)^*(x_\gamma)=\exp(Z(\gamma)/t)\cdot x_\gamma.\]

A quadratic refinement $\sigma\colon \Gamma\to \{\pm 1\}$ of the form $\<-,-\>$ as in Section \ref{quad}, determines an isomorphism  between the  twisted torus $\bT_-$ and the genuine torus $\bT_+$
\begin{equation}
\label{idid}\rho_\sigma\colon \bT_+\to \bT_-, \qquad \rho_{\sigma}^*(x_\gamma)=y_\gamma,\end{equation}
where $y_\gamma\colon \bT_+\to \bC^*$ denotes the genuine character of $\bT_+$ corresponding to $\gamma\in \Gamma$.  Under this identification we obtain a Riemann-Hilbert problem taking values in $\Aut(\bT_+)$, in which the wall-crossing automorphisms take the form 
\[\bS(\ell)^*(y_\beta)=\prod_{Z(\gamma)\in \ell} (1-\sigma(\gamma)\cdot y_\gamma)^{\<\gamma,\beta\>\cdot \Omega(\gamma)}\cdot y_\beta.\]
 Abusing notation, we shall use the same symbols $\Phi(t), \Psi(t)$ for the resulting piecewise holomorphic functions, now taking values in $\Aut(\bT_+)$.

Let us now consider the case of the doubled A$_1$ refined BPS structure of Example \ref{mineral}. The polynomials $\Omega(\gamma)$ are all constant, and the resulting classical BPS structure, and its associated Riemann-Hilbert problem, are precisely the ones studied  in \cite[Section 5]{RHDT} and \cite{RHuncoupled}.
We shall use the identification \eqref{idid} corresponding to the canonical quadratic refinement $\sigma\in \bT_-$ of \eqref{standard}. As usual, since there are only two active rays, the piecewise holomorphic maps $\Psi(t)$ give two functions
\[\Psi_\pm(t)\colon \bC^*\setminus i\ell_\pm \to \Aut(\bT_+).\]
It is shown in \cite{RHuncoupled} that a possible solution, which is in some sense minimal, is given by
\begin{equation}
	\label{cl4}\Psi_\pm(t)^*(y_{\alpha})=y_\alpha, \qquad \Psi_\pm(t)^*(y_{\alpha^\vee})= \Lambda\left(\pm \frac{z}{2\pi i t}, \frac{1}{2}\mp \theta\wb1\right)^{\pm 1}\cdot y_{\alpha^\vee},\end{equation}
where we have written $y_\alpha=\exp(2\pi i \theta)$. This corresponds to $x(\alpha)=\exp(2\pi i \vartheta)$, with $\vartheta=\theta+\half$. Note that since the solution \eqref{cl4} depends explicitly on $\theta$ rather than its exponential, it is not single-valued on the torus $\bT_+$.

\subsection{Limit $\tau\to 0$}
\label{abab}
The first limit  consists of sending $\tau\to 0$ and therefore $q^{\half}\to 1$. The product on the quantum torus $\bC_q[\bT]$ can be expanded around $\tau=0$
\[y_{\gamma_1}* y_{\gamma_2} = y_{\gamma_1+\gamma_2} + \pi i \tau\cdot
\{y_{\gamma_1},y_{\gamma_2}\} + O(\tau^2), \]
where $\{-,-\}$ is a  Poisson bracket on the algebraic torus $\bT_+$ given explicitly by
\[\{y_{\gamma_1},y_{\gamma_2}\}=\<\gamma_1,\gamma_2\> \cdot y_{\gamma_1+\gamma_2}.\]
As $\tau\to 0$   the solution to the quantum Riemann-Hilbert problem specified in Theorem \ref{prop_pa} becomes the solution \eqref{cl4} to the corresponding classical problem. The wall-crossing automorphisms of Lemma \ref{oneone} become
\[\bS(\ell_{\pm} )\colon y_{\alpha^\vee}\mapsto (1+y_{\pm \alpha})^{\mp 1}\cdot y_{\alpha^\vee}.\]
The adjoint description \eqref{adjy} becomes the statement that  the partially-defined automorphism $\Psi_\pm(t)$  is the time 1 Hamiltonian flow of the function
\begin{equation}
\label{finn3}H_\pm (z,t,\theta)=\lim_{\tau\to 0} \Big((2\pi i \tau)\cdot \log \psi_{\pm}(t)\Big).\end{equation}
This boils down to the statement that
\begin{equation}
	\label{hold}\frac{\partial }{\partial \theta} H_\pm (z,t,\theta)= \mp (2\pi i)\cdot \log \Lambda\left(\pm \frac{z}{2\pi i t}, \frac{1}{2}\mp \theta\b1\right).\end{equation}

Define a function
\begin{equation}
\label{back2}\Delta(w,\eta)=\frac{e^{-\zeta'(-1)}\cdot G(w+\eta+1)\cdot e^{-\frac{w^2}{4}+\frac{\eta^2}{2}-\frac{\eta}{2}+\frac{1}{12}}}{\Gamma(w+\eta)^{w+\eta}\cdot w^{-\frac{(w+\eta)^2}{2}+\frac{w+\eta}{2}-\frac{1}{12}}},\end{equation}
where $G(x)$ denotes the Barnes $G$-function, and 
\begin{equation}
\label{blblbl}\zeta'(-1)=\frac{\partial}{\partial s} \zeta_1(s,1)\big|_{s=-1},\end{equation}
is the derivative of the Riemann zeta function at $s=-1$. We refer the reader to \cite{Barnes} and \cite[Appendix]{Vo} for basic properties of the $G$-function.

\begin{lemma}
There is an expression
\[H_\pm (z,t,\theta)=-(2\pi i )\cdot \log \Delta\left(\pm \frac{z}{2\pi i t},\frac{1}{2}\mp \theta\right).\]\end{lemma}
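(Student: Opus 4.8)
The plan is to compute the limit defining $H_\pm$ in \eqref{finn3} directly from the explicit form \eqref{easter} of $\psi_\pm$, and then match the result against the definition \eqref{back2} of $\Delta$. Writing $w=\pm\frac{z}{2\pi i t}$, $\eta_0=\tfrac12\mp\theta$ and $x_0=w+\eta_0$, and using $\psi_\pm(t)=F(w,\tfrac{1+\tau}{2}\mp\theta\b1,\tau)^{-1}$ so that $H_\pm=-(2\pi i)\lim_{\tau\to0}\tau\cdot\log F(w,\tfrac{1+\tau}{2}\mp\theta\b1,\tau)$, the entire lemma reduces to the single identity
\[ \lim_{\tau\to0}\tau\cdot\log F\Big(w,\tfrac{1+\tau}{2}\mp\theta\,\Big|\,1,\tau\Big)=\log\Delta(w,\eta_0). \]
Thus everything comes down to understanding the behaviour of $\log F(w,\eta\b1,\tau)$ as the second period $\tau$ degenerates to $0$, taken inside a sector with $\Re(\tau)>0$ where $F$ is defined.

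I would treat the three factors in the definition \eqref{christmas} of $F$ separately. The elementary factors are immediate: from the generating function \eqref{Bpoly} one has $B_{2,2}(x\b1,\tau)=\tfrac1\tau B_{1,2}(x\b1)+O(1)$ as $\tau\to0$, so the middle factor $\tfrac12 B_{2,2}(x\b1,\tau)\log w$ contributes $\tfrac12 B_{1,2}(x_0\b1)\log w$ to $\lim_{\tau\to0}\tau\log F$, while the explicit term $g(w,\eta\b1,\tau)$ contributes $-\tfrac{3w^2}{4}-\eta_0 w+\tfrac{w}{2}$. In both cases the $\tau$-dependence of the second argument $\eta=\tfrac{1+\tau}{2}\mp\theta$, and the attendant shift $x=x_0+\tfrac\tau2$, affect only subleading orders and drop out after multiplication by $\tau$ and passage to the limit.

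The main step is the degeneration of the double gamma function itself. Approximating the $n$-sum in the Barnes double zeta function $\zeta_2(s,x\b1,\tau)=\sum_{m,n\geq0}(x+m+n\tau)^{-s}$ by an integral gives $\zeta_2(s,x\b1,\tau)\sim\tfrac1\tau\cdot\tfrac{\zeta_H(s-1,x)}{s-1}$, and differentiating at $s=0$ yields
\[ \lim_{\tau\to0}\tau\cdot\log\Gamma_2(x\b1,\tau)=-\zeta_H'(-1,x)-\zeta_H(-1,x). \]
I would then rewrite the right-hand side using $\zeta_H(-1,x)=-\tfrac12 B_2(x)$ together with the identity $\zeta_H'(-1,x)=\zeta'(-1)-\log G(x+1)+x\log\Gamma(x)$, which is checked by differentiating in $x$ (using $\partial_x\zeta_H(s,x)=-s\,\zeta_H(s+1,x)$, the Lerch formula $\zeta_H'(0,x)=\log\Gamma(x)-\tfrac12\log(2\pi)$, and the log-derivatives of $\Gamma$ and of the Barnes $G$-function, see \cite{Barnes,Vo}) and comparing at $x=1$. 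Assembling the three contributions at $x=x_0$ and simplifying with $x_0=w+\eta_0$, the $\log G$, $\log\Gamma$, $\log w$ and $\zeta'(-1)$ terms reproduce exactly those in $\log\Delta(w,\eta_0)$, while the remaining polynomial collapses to $-\tfrac{w^2}{4}+\tfrac{\eta_0^2}{2}-\tfrac{\eta_0}{2}+\tfrac1{12}$, precisely as in \eqref{back2}.

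The \emph{principal obstacle} is making this degeneration of $\log\Gamma_2$ rigorous: one must justify replacing the $n$-sum by an integral at the level of the analytically continued zeta function, interchange this with differentiation at $s=0$, and control the limit $\tau\to0$ within a sector $\Re(\tau)>0$ rather than merely along positive reals; this is naturally handled by Euler–Maclaurin estimates for the Barnes zeta function of the type used in the appendix. As an independent cross-check of the $\theta$-dependence, I would also verify the differential identity $\partial_\eta\log\Delta(w,\eta)=-\log\Lambda(w,\eta\b1)$, which is a one-line consequence of the log-derivative of $G$ and formula \eqref{dall}; combined with \eqref{hold} this already shows that $H_\pm$ and $-(2\pi i)\log\Delta(\pm\tfrac{z}{2\pi i t},\tfrac12\mp\theta)$ agree up to a function of $(z,t)$ alone, reducing the remaining task to fixing that function, which the direct computation above pins to zero and which one can confirm by comparing the $|w|\to\infty$ asymptotics of the two sides via Proposition \ref{mod}(d).
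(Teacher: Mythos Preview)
Your proposal is correct and follows essentially the same route as the paper. Both arguments reduce to the limit $\lim_{\tau\to0}\tau\log\Gamma_2(x\b1,\tau)=-\zeta_H(-1,x)-\zeta_H'(-1,x)$ (the paper simply quotes Spreafico \cite[Cor.~9.4]{Spreaf} here, which disposes of the ``principal obstacle'' you flag) and then rewrite $\zeta_H'(-1,x)$ in terms of the Barnes $G$-function; the paper does the latter by passing through $\Gamma_2(x\b1,1)$ via Vardi's relation and Barnes' identity $\Gamma_2(x\b1,1)^{-1}=\rho\,G(x)\,(2\pi)^{-x/2}$, whereas you check it by differentiating in $x$ and matching at $x=1$, which is equivalent.
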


\begin{proof}
Start with
a result of Spreafico \cite[Cor.\ 9.4]{Spreaf}, which states that
\[\lim_{\tau \to 0 } \tau\cdot \log \Gamma_2(x\b 1,\tau) =-\zeta_H(-1,x)-\frac{\partial}{\partial s} \zeta_H(s,x)\b_{s=-1} ,\]
where $\zeta_H(s,x)=\zeta_1(s,x\b 1)$ denotes the Hurwitz zeta function. The arguments of \cite[p.\ 499]{Vardi} gives the relation
\[\zeta_2(s,x\b 1,1)=\zeta_1(s-1\b 1) + (1-x)\zeta_1(s,x\b 1),\]
that can be differentiated with respect to $s$ to get 
\[-\frac{\partial}{\partial s}\zeta_1(s,x\b 1)|_{s=-1} = -\log\Gamma_2(x\b 1,1) + (1-x)\log\Gamma_1(x\b 1).\]
It is shown in \cite[Section 27]{Barnes3} that there is an identity
\begin{equation}\label{rho}\Gamma_2(x\b 1,1)^{-1}=\rho\cdot G(x)\cdot (2\pi)^{-\frac{x}{2}}=\rho \cdot G(x+1)\cdot \Gamma(x) \cdot (2\pi)^{-\frac{x}{2}}.\end{equation}
The constant $\rho=\rho(1,1)$ is inserted to allow for the fact that Barnes uses a different convention for his double gamma function which differs from the modern one by a constant  (see  \cite[Equation (3.19)]{Ruj2}). We can determine this constant by comparing the constant terms in the large $w$ asymptotic expansions of the two sides, which can be found in Corollary \ref{corGamma2exp}, and \cite[Section 15]{Barnes} or \cite[Appendix]{Vo} respectively. Using the identity \cite[(A.11)]{Vo} we obtain  $\rho=\exp(-\zeta'(1))\cdot\sqrt{2\pi}$.

Use the fact that
\[\zeta_1(-1,x\b 1)= -\half B_{1,2}(x\b1)=-\half\left(x^2-x+\frac{1}{6}\right)\]
which can be found in \cite[Appendix A]{JM} or in \cite[Section 1]{Spreaf}, to get
\[\lim_{\tau \to 0 } \tau\cdot \log \Gamma_2(x\b 1,\tau)=\frac{1}{2}\left(x^2-x+\frac{1}{6}\right)+\log G(x+1)-x\log \Gamma(x)-\log \zeta'(-1).\]
Using the definition \eqref{christmas} this easily implies that
\[\lim_{\tau \to 0 } \tau\cdot \log F(w,\eta\b 1,\tau)=\log \Delta(w,\eta).\]
The result then follows from the defintions \eqref{easter} and \eqref{finn3}.
\end{proof}

The relation \eqref{hold} follows from the identity
\[\frac{\partial }{\partial \eta} \log \Delta(w,\eta)= -\log \Lambda(w,\eta\b 1),\]
which follows from the definitions \eqref{dall} and \eqref{back2}, together with the relation
\begin{equation*}
\frac{\partial }{\partial \eta} \log G(w+\eta+1)= \frac{1}{2} - (w+\eta) +\frac{1}{2} \log(2\pi)+(w+\eta)\frac{\partial }{\partial \eta}\log \Gamma(w+\eta),\end{equation*}
which can be found  in \cite[Section 12]{Barnes} (see also \cite[Formula (A.13)]{Vo}).

\subsection{Limit $\tau\to  1$}
The second limit consists of sending $\tau\to 1$ and hence $q^{\half}\to -1$. Although the quantum torus algebra $\bC_q[\bT]$ becomes commutative in this limit, the extension \eqref{ex}--\eqref{produ} does not. Define a function
\begin{equation}
\label{upsy}\Upsilon(w,\theta)= \frac{e^{-\zeta'(-1)} \cdot G(w+\theta+1)\cdot e^{\frac{3w^2}{4}+\theta w}}{(2\pi)^{\half{(w+\theta)}}\cdot w^{\frac{1}{2}(w+\theta)^2-\frac{1}{12}}},\end{equation}
where $G(x)$ is again the Barnes $G$-function, and $\zeta'(-1)$ is given by \eqref{blblbl} as before. 

\begin{lemma}
There is an identity
\[\lim_{\tau\to 1} \psi_\pm(t) = F\Big(\pm \frac{z}{2\pi i t} ,1\mp \theta\b 1,1\Big)^{-1}= \left( \frac{\pm z}{2\pi i t}\right)^{-\frac{1}{12}}\cdot \Upsilon\left(\frac{\pm z}{2\pi i t},\mp \theta\right).\]
\end{lemma}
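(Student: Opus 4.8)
The plan is to establish the two asserted equalities separately.

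For the first, I would pass to the limit $\tau\to 1$ directly in the defining expression \eqref{easter}. Writing $w=\pm\frac{z}{2\pi i t}$, we have $\psi_\pm(t)=F\big(w,\tfrac{1+\tau}{2}\mp\theta\b 1,\tau\big)^{-1}$, so the only arguments that depend on $\tau$ are the offset $\eta=\tfrac{1+\tau}{2}\mp\theta$ and the second period, equal to $\tau$ (the first period being $1$). As $\tau\to 1$ these tend respectively to $1\mp\theta$ and to $1$, and since the ratio of the two periods is $\tau$, with $\Re(\tau)>0$ for $\tau$ near $1$, the limiting pair of periods $(1,1)$ lies in the domain where $F$ is holomorphic by Proposition \ref{mod}(a). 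Hence the limit is obtained by mere substitution and equals $F\big(w,1\mp\theta\b 1,1\big)^{-1}$.

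For the second equality I would specialise the definition \eqref{christmas} to both periods equal to $1$. Setting $x=w+\eta=w+1\mp\theta$, the explicit value $B_{2,2}(x\b 1,1)=x^2-2x+\tfrac56$ recorded in Section \ref{sec_B2} turns the middle factor into $w^{\frac12(x^2-2x+5/6)}$, while the third factor collapses to $\exp\big(-\tfrac{3w^2}{4}-\eta w+w\big)=\exp\big(-\tfrac{3w^2}{4}\mp\theta w\big)$. The only remaining non-elementary ingredient is $\Gamma_2(x\b 1,1)$, which I would rewrite in terms of the Barnes $G$-function using the identity \eqref{rho} together with the normalisation constant $\rho=e^{-\zeta'(-1)}\sqrt{2\pi}$ fixed in the proof of the previous lemma. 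Inverting, and then collecting the powers of $w$, the powers of $2\pi$, and the Gaussian-linear exponential, I would compare the result termwise with the definition \eqref{upsy} of $\Upsilon(w,\mp\theta)$.

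I expect the substantive difficulty to be bookkeeping rather than any deep input. One must keep consistent principal branches of $\log w$ and $\log(2\pi)$, apply the shift relation $G(x)=G(x+1)/\Gamma(x)$ exactly as dictated by the chosen form of \eqref{rho}, and, above all, reconcile the various fractional powers of $w$: those produced by $\tfrac12 B_{2,2}(x\b 1,1)$, those hidden in the Barnes normalisation $\rho$ and the $\zeta$-value $\zeta_1(-1,x\b 1)$, and the explicit $-\tfrac1{12}$ in \eqref{upsy}, so that they combine to exactly the factor $w^{-1/12}$ appearing in the statement. Pinning down this residual power of $w$ is the delicate point; once it is matched, the remaining constants cancel against $\rho$ and the identity follows.
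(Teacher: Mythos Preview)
Your plan is correct and matches the paper's own proof, which simply says the identity is ``an easy calculation, once the constant $\rho$ in \eqref{rho} is computed.'' You have correctly identified the two ingredients: continuity of $F$ in its arguments for the first equality, and the relation $\Gamma_2(x\b 1,1)^{-1}=\rho\cdot G(x)\cdot(2\pi)^{-x/2}$ with $\rho=e^{-\zeta'(-1)}\sqrt{2\pi}$ for the second, followed by termwise comparison with \eqref{upsy}. One small slip to fix when you carry this out: with $\eta=1\mp\theta$ one has $-\eta w+w=\pm\theta w$, not $\mp\theta w$, so the third factor of $F$ is $\exp\big(-\tfrac{3w^2}{4}\pm\theta w\big)$; the sign flips to $\mp\theta w$ only after inverting to get $F^{-1}$, where it then matches the exponential in $\Upsilon(w,\mp\theta)$.
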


\begin{proof} This is an easy calculation, once the constant $\rho$ in \eqref{rho} is computed.
\end{proof}

Note that the difference relation Prop. \ref{mod} (c)  gives in the limit $\tau=1$
\begin{equation}
\label{finn}
\frac{\Upsilon(w,\theta)}{\Upsilon(w,\theta-1)}=\Lambda(w,\theta\b 1).\end{equation}

One rather mysterious feature of \cite{RHDT} was the introduction of the $\tau$-function. That paper dealt only with the case $\xi(\alpha)=1$ corresponding to $\theta=\half$.
%It is defined by the relation
%\[\frac{\partial}{\partial z} \log \tau(z,t)=-\frac{1}{2\pi i}\cdot \frac{\partial}{\partial t} \log y_{\alpha^\vee} (z,t),\]
%together with the condition that $\tau(z,t)$ should be invariant under simultaneous rescaling of $z$ and $t$. If such a function  $\tau(z,t)$ exists it is unique up to multiplication by an arbitrary  constant.
It was proved in \cite{RHDT}  that a possible choice for the $\tau$-function in the doubled A$_1$ case is
\[\tau_\pm (z,t)= \Upsilon\left(\frac{\pm z}{2\pi i t},0\right).\]
We can therefore view the function
\begin{equation}
	\Upsilon\left(\frac{\pm z}{2\pi i t},\mp \theta\right)=  \left( \frac{\pm z}{2\pi i t}\right)^{\frac{1}{12}}\cdot \lim_{\tau\to 1} \psi_\pm(t)\end{equation}
as an extension of the function $\tau_\pm$ to all values of $\theta$. Note however that there is a confusing shift  here: with our conventions the classical Riemann-Hilbert problem studied in \cite[Section 5.3]{RHDT} corresponds to $\theta=\half$.
The  difference relation \eqref{finn} implies that\[\Upsilon\left(\pm \frac{ z}{2\pi i t},\mp\left(\theta+\half\right)\right)=\Upsilon\left(\pm \frac{ z}{2\pi i t}\mp\left(\theta-\half\right)\right)\cdot\Lambda\left(\pm \frac{z}{2\pi i t}, \half\mp \theta\b1\right)^{\pm 1},\]
which perhaps gives a clue as to the true nature of the $\tau$-function.

%%%%%%%%%%%%%%%%%%%%%%%%%%%%%%%%%%%%%%%%%%%%%%%%%
%%%%%%%%%%%%%%%%%%%%%%%%%%%%%%%%%%%%%%%%%%%%%%%%%
%%%%%%%%%%%%%%%%%%%%%%%%%%%%%%%%%%%%%%%%%%%%%%%%%

\section{The general case}
\label{general}
In this section we consider the general quantum Riemann-Hilbert problem corresponding to a refined BPS structure satisfying the  four conditions of Definition \ref{leaf}. We shall be rather brief since the proofs are all identical to the ones for the doubled A$_1$ case, just with added notation.\footnote{The correction indicated in footnote \ref{change}  requires  some changes to the formulae of Sections 5.2 and 5.3   compared with the published version. We also corrected a couple of more minor mistakes.}

\subsection{Extended quantum torus}

Let us consider a refined BPS structure satisfying the four conditions of Definition \ref{leaf}. As before we will use a quadratic refinement of the form $\<-,-\>$ on $\Gamma$ to introduce some convenient signs. For simplicity we assume that we can find such a quadratic refinement $\sigma\colon \Gamma\to \{\pm 1\}$ with the property that
\begin{equation}
\label{grrr}\Omega_n(\gamma)\neq 0 \implies \sigma(\gamma)=(-1)^{n+1}.\end{equation}
The general palindromic case is  analogous with some terms occurring with a different sign.

As in Section \ref{quad} we then introduce alternative generators for $\bC_q[\bT]$ 
\begin{equation}
\label{hi}q^{\half}=-\bL^{\half}, \qquad y_\gamma=\sigma(\gamma)\cdot x_\gamma.\end{equation}

The uncoupled assumption ensures that we can decompose
\begin{equation}
\label{decompose}\Gamma=\Gamma_e\oplus \Gamma_m,\end{equation}
in such a way that
$\Omega(\gamma)=0$ unless $\gamma\in \Gamma_e$, and the form $\<-,-\>$ vanishes when restricted to $\Gamma_e$ and $\Gamma_m$ separately. Introduce the vector space
\[V_e=\Hom_\bZ(\Gamma_e,\bC)\isom \bC^k,\]
and denote a typical element by $\theta\colon \Gamma_e\to \bC$. Note that each  element $\delta\in \Gamma_m$ determines a corresponding element  $\<\delta,-\>\in V_e$.

We can write down an extended quantum torus algebra
\begin{equation}
\label{grad}\widehat{\bC_q[\bT]}=\bigoplus_{\delta\in \Gamma_m} \Mer(\cH\times V_e)\cdot y_{\delta},\end{equation}
in much the same way as before, where the coefficients of the formal symbols $y_\delta$ are meromorphic functions $f(\tau,\theta)$ on the product of the upper half-plane $\cH$ with the vector space $V_e$. The product is
\[\Big(f_1(\tau,\theta)\cdot y_{\delta_1}\Big) *\Big( f_2(\tau,\theta)\cdot y_{\delta_2}\Big)={ f_1(\tau,\theta)\cdot f_2(\tau,\theta+\tau\<\delta_1,-\>)}\cdot y_{\delta_1+\delta_2},\]
and there is  an injective homomorphism
$I\colon \bC_q[\bT]\into \widehat{\bC_q[\bT]}$ defined by \[I\colon q^{\frac{k}{2}} \cdot y_{\gamma_e+\gamma_m}\mapsto \exp\big( \pi i  (k+\<\gamma_m,\gamma_e\>)\tau+2\pi i \theta(\gamma_e)\big)\cdot y_{\gamma_m},\]
where $(\gamma_e,\gamma_m)$ denotes an arbitrary element of $\Gamma$ under the decomposition \eqref{decompose}.
As before we identify elements of $\bC_q[\bT]$ with their images under the embedding $I$. Note that
\begin{equation}
\label{hh}I(q^{\half})=\exp(\pi i \tau)\cdot 1, \qquad I(y_{\gamma_e})=\exp(2\pi i \theta(\gamma_e))\cdot 1, \qquad I(y_{\gamma_m})=y_{\gamma_m},\end{equation}
for $\gamma_e\in \Gamma_e$ and $\gamma_m\in \Gamma_m$. 

\subsection{Automorphisms associated to rays}

According to \eqref{tir}, and using \eqref{grrr} and \eqref{hi},  the  automorphism associated to an active ray $\ell\subset \bC^*$ is
\begin{equation*}
	\bS_q(\ell)=\Ad_{\DT_q(\ell)}, \qquad \DT_q(\ell)=\prod_{Z(\gamma)\in \ell} \prod_{n\in \bZ} \bE_q\left((- q^{\half})^{n+1}\cdot y_\gamma\right)^{(-1)^{n-1}\, \Omega_n(\gamma)},\end{equation*}
which makes sense in the extended quantum torus as before.
To give a formula for it we first introduce some notation. Given classes $\beta,\gamma\in \Gamma$, let \[\epsilon(\beta,\gamma)\in \{\pm 1\}\] denote the sign of $\<\beta,\gamma\>$, and \[\kappa(\beta,\gamma)=\Big\{\lambda =\epsilon(\beta,\gamma)\cdot (2j+1) : 0\leq  j< |\<\beta,\gamma\>|\Big\},\]
denote the set of odd integers  lying between $0$ and $2\<\beta,\gamma\>$.
\begin{prop}
\label{newone}
The automorphism $\bS_q(\ell)$ of the algebra  \eqref{grad} preserves the grading,  acts trivially on the zeroth graded piece, and satisfies
\begin{equation}
\label{autaut}
\bS_q(\ell)(y_\beta)= \prod_{Z(\gamma)\in \ell}  \prod_{\lambda\in \kappa(\beta,\gamma)}\prod_{n\in \bZ} \left( 1- (-q^\half)^{\lambda+n} \cdot y_\gamma\right)^{(-1)^{n-1}\cdot \Omega_n(\gamma)\cdot \epsilon(\beta,\gamma)}\cdot y_\beta,
\end{equation}
for any class $\beta\in \Gamma_m$. 
\end{prop}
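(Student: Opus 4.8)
The plan is to generalise the computation behind Lemma \ref{oneone} verbatim, using the uncoupled hypothesis to control the interaction between the various factors of $\DT_q(\ell)$. The first observation is that $\DT_q(\ell)$ already lies in the \emph{commutative} degree-zero subalgebra. Indeed, by \eqref{grrr}--\eqref{hi} together with the defining property $\Omega(\gamma)\neq 0\Rightarrow\gamma\in\Gamma_e$, every factor $\bE_q(-q^{\frac{n+1}{2}}y_\gamma)^{-\Omega_n(\gamma)}$ involves only classes $\gamma\in\Gamma_e$, and by \eqref{hh} each such $y_\gamma$ is identified under $I$ with the degree-zero element $\exp(2\pi i\theta(\gamma))\cdot 1$. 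Since the BPS structure is finite and each $\Omega(\gamma)$ is a Laurent polynomial, $\DT_q(\ell)$ is a \emph{finite} product of elements of the commutative subalgebra. Exactly as in Lemma \ref{oneone}, this shows immediately that $\Ad_{\DT_q(\ell)}$ preserves the $\Gamma_m$-grading and acts trivially on the zeroth graded piece, so it remains only to compute the action on a single generator $y_\beta$ with $\beta\in\Gamma_m$.

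The uncoupled hypothesis enters next. Because the form vanishes on $\Gamma_e$ by \eqref{decompose}, any two classes $\gamma_1,\gamma_2$ occurring in $\DT_q(\ell)$ satisfy $\<\gamma_1,\gamma_2\>=0$, so $y_{\gamma_1}$ and $y_{\gamma_2}$ commute; hence the factors of $\DT_q(\ell)$ pairwise commute and $\Ad_{\DT_q(\ell)}$ is the composite, in any order, of the adjoint actions of the individual factors. As each factor is a function of a single degree-zero element $y_\gamma$, conjugating $y_\beta$ by it returns a degree-zero coefficient times $y_\beta$, and these coefficients simply multiply across the factors. This reduces the claim to a single-factor computation.

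For one factor, set $u=y_\gamma$ and $m=\<\beta,\gamma\>$. The defining relation of $\bC_q[\bT]$ gives the $q$-commutation $y_\beta * u^{k}=q^{km}\,u^{k}*y_\beta$, whence $y_\beta*h(u)=h(q^{m}u)*y_\beta$ for any power series $h$. Applying this with $h(u)=\bE_q(-q^{\frac{n+1}{2}}u)^{-\Omega_n(\gamma)}$ turns the conjugation into a ratio of quantum dilogarithms whose arguments differ by the integer power $q^{m}$,
\[
\Ad_{h}(y_\beta)=\frac{h(u)}{h(q^{m}u)}\cdot y_\beta=\left[\frac{\bE_q\big(-q^{\frac{n+1}{2}}u\big)}{\bE_q\big(-q^{\frac{n+1}{2}+m}u\big)}\right]^{-\Omega_n(\gamma)}\cdot y_\beta .
\]
Now I would apply the telescoping difference relation $\bE_q(x)=\bE_q(qx)\,(1-x)$ from the proof of Lemma \ref{oneone} exactly $|m|$ times. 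This collapses the ratio into the finite product of $|m|$ linear factors $1-q^{\frac{n}{2}+\lambda}y_\gamma$ displayed in \eqref{autaut}: the shift $\tfrac{n+1}{2}$ together with the telescoping index $0,1,\dots,|m|-1$ produces exactly the half-integer exponents $\tfrac{n}{2}+\lambda$ with $\lambda\in\kappa(\beta,\gamma)$. When $m>0$ the ratio telescopes directly with exponent $-\Omega_n(\gamma)$; when $m<0$ the ratio inverts, running the telescoping in the opposite direction over the negative half-integers and flipping the exponent, both cases being recorded uniformly by the sign $\epsilon(\beta,\gamma)$ in the exponent $-\Omega_n(\gamma)\cdot\epsilon(\beta,\gamma)$. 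The integrality hypothesis ensures these exponents are integers, and finiteness ensures the products over $n$ and over $Z(\gamma)\in\ell$ are finite, so the expression lives in $\widehat{\bC_q[\bT]}$. Multiplying the single-factor corrections together then yields \eqref{autaut}.

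The main obstacle is bookkeeping rather than substance: tracking the sign $\epsilon(\beta,\gamma)$ and the direction of the telescoping when $\<\beta,\gamma\><0$, so that the half-integer powers $\tfrac{n}{2}+\lambda$ and the exponent $-\Omega_n(\gamma)\epsilon(\beta,\gamma)$ emerge in exactly the stated form. Calibrating against the doubled A$_1$ case of Lemma \ref{oneone}, where $\beta=\alpha^\vee$, $\gamma=\alpha$, $\<\beta,\gamma\>=1$ and $\kappa(\beta,\gamma)$ consists of a single half-integer, provides a useful sanity check on the signs and on the indexing set $\kappa$.
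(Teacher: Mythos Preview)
Your proposal is correct and follows exactly the approach the paper indicates: the paper's own proof reads in its entirety ``This follows by an explicit computation exactly as in Lemma \ref{oneone}. We leave the details to the reader,'' and your write-up supplies precisely those details---the reduction to degree zero via \eqref{hh}, the single-factor conjugation identity $y_\beta*h(u)=h(q^{\langle\beta,\gamma\rangle}u)*y_\beta$, and the iterated use of $\bE_q(x)=(1-x)\bE_q(qx)$---together with the bookkeeping for the sign $\epsilon(\beta,\gamma)$ and the index set $\kappa(\beta,\gamma)$.
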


\begin{proof}
This follows by an explicit computation exactly as in Lemma \ref{oneone}. 
\end{proof}

\subsection{Solution in general case}

As in \cite[Section 4]{RHDT} it is best to consider the solution to the Riemann-Hilbert problem to be a collection of functions
\[\Phi_r\colon \bH_r\to \Aut\widehat{\bC_q[\bT]},\]
 defined on each half-plane $\bH_r$ centered on a non-active ray $r\subset \bC^*$. As before we write $\Psi_r(t)=\Phi_r(t)\circ \epsilon_Z(t)$. 
 
 \begin{theorem}
A solution to the quantum Riemann-Hilbert problem in the case of a refined BPS structure satisfying the four conditions of Definition \ref{leaf} is given by the collection of functions
\[\Psi_r(t)( y_\beta)= \prod_{Z(\gamma)\in i\bH_r} \prod_{\lambda\in \kappa(\beta,\gamma)}\prod_{n\in \bZ} \Lambda\left( \frac{Z(\gamma)}{2\pi i t},\frac{1+n}{2}- \theta(\gamma)- \half (\lambda+n)\tau\wb 1\right)^{(-1)^n\cdot \Omega_n(\gamma)\cdot \epsilon(\beta,\gamma)} \cdot y_\beta , \]
where the outer product is over the finitely many active classes $\gamma\in \Gamma_e$ for which  $Z(\gamma)\in i\bH_r$. 
\end{theorem}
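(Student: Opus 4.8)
The plan is to verify directly that the collection $\{\Psi_r\}$ satisfies the three conditions (qRH1)--(qRH3), repeating the argument for Theorem \ref{prop_pa} factor-by-factor. Every automorphism in play --- the claimed $\Psi_r$, the wall-crossing $\bS_q(\ell)$ of Proposition \ref{newone}, and the translations $\epsilon_Z(t)$ --- preserves the $\Gamma_m$-grading of \eqref{grad} and acts trivially on the commutative grade-zero subalgebra $\widehat{\bC_q[\bT]}_0$, where all the $\Lambda$-values live. Hence it is enough to check the conditions on the action on a single generator $y_\beta$ with $\beta\in\Gamma_m$, where $\Psi_r(t)(y_\beta)$ is a \emph{finite} product of $\Lambda$-values: finite because condition (a) of Definition \ref{leaf} leaves only finitely many $\gamma$ with $\Omega(\gamma)\neq 0$, each set $\kappa(\beta,\gamma)$ is finite, and each $\Omega(\gamma)\in\bQ[\bL^{\pm\frac{1}{2}}]$ has finitely many nonzero coefficients $\Omega_n(\gamma)$.

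First I would verify the jump (qRH1). For two adjacent half-planes $\bH_r,\bH_{r'}$ separated by a single active ray, the index sets $\{Z(\gamma)\in i\bH_r\}$ and $\{Z(\gamma)\in i\bH_{r'}\}$ differ precisely in that the active classes $\gamma$ on the crossed ray are replaced by their negatives $-\gamma$. Writing $w=\frac{Z(\gamma)}{2\pi i t}$, the $\Lambda$-factor of $\Psi_r$ then carries argument $+w$ while the matching factor of $\Psi_{r'}$ carries argument $-w$, and the reflection property of Proposition \ref{lemGamma}(c), namely $\Lambda(w,\eta\b 1)\cdot\Lambda(-w,1-\eta\b 1)=(1-e^{\pm 2\pi i(w+\eta)})^{-1}$, converts each such pair (ranging over $\lambda\in\kappa(\beta,\gamma)$ and $n\in\bZ$) into the corresponding wall-crossing factor $(1-q^{\frac{n}{2}+\lambda}e^{-Z(\gamma)/t}y_\gamma)^{-\Omega_n(\gamma)\epsilon(\beta,\gamma)}$ of Proposition \ref{newone}, twisted by $\epsilon_Z(t)$. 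Assembling these reproduces the coefficient of $y_\beta$ in $\tilde{\bS}_q(\ell)=\epsilon_Z(-t)\circ\bS_q(\ell)\circ\epsilon_Z(t)$; since the $\Lambda$-values are grade-zero and composition of these automorphisms simply multiplies their coefficients on $y_\beta$, this coefficient identity is exactly (qRH1).

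The limiting conditions follow from the asymptotics of $\Lambda$ applied to each factor. As $t\to 0$ each argument $\frac{Z(\gamma)}{2\pi i t}\to\infty$, so the large-$w$ expansion of Proposition \ref{lemGamma}(e) forces $\log\Lambda\to 0$ and each factor tends to $1$, giving $\Psi_r(t)\to\id$ and hence (qRH2). As $t\to\infty$ each argument tends to $0$, and the near-origin bound of Proposition \ref{lemGamma}(d) yields $|t|^{-k}<|\Lambda|<|t|^{k}$ for each factor; multiplying the finitely many factors preserves a bound of this shape and gives (qRH3).

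The only genuine obstacle is the combinatorial bookkeeping in (qRH1): one must check that the index sets $\kappa(\beta,\gamma)$, the half-integer shifts $\frac{n}{2}+\lambda$, and the signs $\epsilon(\beta,\gamma)$ appearing in the $\Lambda$-product defining $\Psi_r$ match, term for term, those in the wall-crossing automorphism of Proposition \ref{newone}, with the substitution $\gamma\mapsto-\gamma$ correctly reindexing $\kappa$, $\epsilon$ and $\theta(\gamma)$ when the argument flips from $w$ to $-w$. Because each of these data depends linearly on $\gamma$ and the reflection identity of Proposition \ref{lemGamma}(c) is applied independently to every triple $(\gamma,\lambda,n)$, this matching reduces to the doubled-A$_1$ computation of Theorem \ref{prop_pa} carried out in parallel, so no analytic input beyond Propositions \ref{newone} and \ref{lemGamma} is required.
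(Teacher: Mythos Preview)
Your approach is essentially the paper's: verify (qRH1) factor-by-factor via the reflection identity of Proposition~\ref{lemGamma}(c), and deduce (qRH2)--(qRH3) from the asymptotics in parts (e) and (d). The paper likewise notes that the products for neighbouring rays differ exactly by the classes $\gamma$ with $Z(\gamma)\in\ell$ (equivalently, in your language, by replacing such $\gamma$ with $-\gamma$), applies the reflection identity, and then matches against Proposition~\ref{newone}.

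There is, however, one ingredient you have omitted from the (qRH1) bookkeeping that the paper invokes explicitly: the palindromic hypothesis $\Omega_n(\gamma)=\Omega_{-n}(\gamma)$. Under $\gamma\mapsto-\gamma$ the data $\theta(\gamma)$, $\epsilon(\beta,\gamma)$ and $\kappa(\beta,\gamma)$ all change sign, but the index $n$ does not move. Concretely, to pair the factor
\[
\Lambda\Big(w,\ \tfrac{1}{2}-\theta(\gamma)-\big(\tfrac{n}{2}+\lambda\big)\tau\ \Big|\ 1\Big)
\]
in the $\gamma$-product with a factor $\Lambda(-w,1-\eta\,|\,1)$ in the $(-\gamma)$-product so that Proposition~\ref{lemGamma}(c) applies, you must match the triple $(n,\lambda)$ with $(-n,-\lambda)$; but the exponent of that matching factor is $\Omega_{-n}(\gamma)\cdot\epsilon(\beta,\gamma)$, not $\Omega_n(\gamma)\cdot\epsilon(\beta,\gamma)$. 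Without $\Omega_{-n}=\Omega_n$ the exponents do not agree and the product does not collapse to the wall-crossing automorphism of Proposition~\ref{newone}. In the doubled A$_1$ case only $n=0$ occurs, so this issue is invisible there; hence your claim that the general case ``reduces to the doubled-A$_1$ computation carried out in parallel'' is not quite right without inserting the palindromic assumption at this step.
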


\begin{proof}
Consider small clockwise (respectively anti-clockwise) perturbations $r_+$ (respectively $r_-$) of an active ray $\ell$. Note that the product in the statement of the Theorem for the rays $r_\pm$ differ precisely by products over the classes $\gamma\in \Gamma_e$ satisfying $Z(\gamma)\in\pm \ell$. It follows that for $t\in \bH_\ell$
\[\Psi_{r_+}(t)( y_\beta)=\prod_{Z(\gamma)\in \ell} \prod_{\lambda\in \kappa(\beta,\gamma)}\prod_{n\in \bZ}  \left( 1+(-1)^n e^{ 2\pi i \theta(\gamma)+\pi i (\lambda+n)\tau-\frac{Z(\gamma)}{t} }\right)^{(-1)^{n-1}\cdot \Omega_n(\gamma)\cdot \epsilon(\beta,\gamma)}\cdot \Psi_{r_-}(t) (y_\beta),\]
where we used the palindromic assumption $\Omega_n(\gamma)=\Omega_{-n}(\gamma)$ together with Proposition \ref{lemGamma}(c). Using the formula \eqref{autaut} and the identifications \eqref{hh} this agrees with the wall-crossing automorphism
\[\tilde{\bS}_q(\ell)= \epsilon_Z(-t)\circ {\bS_q}(\ell) \circ \epsilon_Z(t)\in \Aut \widehat{\bC_q[\bT]}.\]
The other conditions of the  Riemann-Hilbert problem are checked in exactly the same way as before, since the product appearing in the statement of the Theorem is finite.
\end{proof}

The adjoint form is given by the expression
\[\psi_{r}(t)=\prod_{Z(\gamma)\in i\bH_r}\prod_{n\in \bZ} F\bigg(\frac{Z(\gamma)}{2\pi i t},\frac{1+n}{2} +\frac{(1-n)\tau}{2}- \theta(\gamma)\wb1, \tau\bigg)^{(-1)^{n-1}\cdot \Omega_n(\gamma)}.\]
We recover the formulae of Section \ref{sec_soln} by gluing the solutions $\Psi_r(t)$ for rays $r$ contained in the half-plane $\pm \Im(t/z)<0$ to obtain a function $\Psi_\pm(t)$ on the domain $\bC^*\setminus i\ell_\pm$.

%%%%%%%%%%%%%%%%%%%%%%%%%%%%%%%%%%%%%%%%%%%%%%%%%
%%%%%%%%%%%%%%%%%%%%%%%%%%%%%%%%%%%%%%%%%%%%%%%%%
%%%%%%%%%%%%%%%%%%%%%%%%%%%%%%%%%%%%%%%%%%%%%%%%%
%%%%%%%%%%%%%%%%%%%%%%%%%%%%%%%%%%%%%%%%%%%%%%%%%

\appendix

\section{A second Stirling formula for the multiple Gamma functions}
\label{app}

The goal of this Appendix is to compute an asymptotic expansion for $\log\Gamma_N(x+\delta\b\underline{a})$, $N\geq 1$, analogous to the second Stirling approximation for the Gamma function. We assume once for all that $\underline{a}\in \left(\bC^*\right)^N$ with $a_i$ lying in the same open half-plane in $\bC^*$, and denote by $\lambda_{\underline{a}}$ a non-zero complex number such that $\Re(\lambda_{\underline{a}} \cdot a_i)>0$ for every $i=1,\dots,N$. 
\begin{theorem}\label{stirling2}
Let $x,\delta\in\bC$, with $|\arg\left(\frac{x}{\lambda_{\underline{a}}}\right)|<\pi$ and $|\arg\left(\frac{x+\delta}{\lambda_{\underline{a}}}\right)|<\pi$, then
 $$\log \Gamma_N\left(x+\delta \b \underline{a}\right)$$ has asymptotic expansion as $|x|\to \infty$ away from poles
	\begin{gather*}
	\frac{(-1)^{N+1}}{N!} \Big( B_{N,N}(x+\delta\b \underline{a})\log(x) - \sum_{k=0}^{N-1}c_{N,k}\,B_{N,k}(0\b\underline{a})(x+\delta)^{N-k} + P_{N-1}(x,\delta\b \underline{a}) \Big) \\
	+\sum_{k>0}\frac{(-1)^{N+k}\cdot B_{N,N+k}(\delta\b\underline{a})}{k(k+1)\cdots(k+N)} \cdot x^{-k},
	\end{gather*}
where 
\begin{enumerate}
\item $c_{N,k}=\binom{N}{k}\cdot\sum_{l=1}^{N-k}l^{-1}$ are combinatorial factors, and
\item $P_{N-1}(x,\delta\b \underline{a})$ is a polynomial of degree $N-1$ consisting in the non-negative degree terms of the Laurent polynomial
	\[ B_{N,N}(x+\delta\b \underline{a})\sum_{n=1}^N\frac{(-1)^{n+1}\delta^n}{n}x^{-n} .
	\]
\end{enumerate}
\end{theorem}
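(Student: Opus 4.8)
The plan is to reduce the statement to an asymptotic analysis of the Barnes zeta function and then differentiate at $s=0$. Since $\log\Gamma_N(x+\delta\b\underline a)=\partial_s\zeta_N(s,x+\delta\b\underline a)|_{s=0}$, it suffices to produce an asymptotic expansion of $\zeta_N(s,x+\delta\b\underline a)$ in $x$ that is uniform for $s$ near $0$. First I would use the Mellin-type representation $\zeta_N(s,y\b\underline a)=\Gamma(s)^{-1}\int_0^\infty t^{s-1}e^{-yt}\prod_{i=1}^N(1-e^{-a_it})^{-1}\,dt$, valid for $\Re(a_i)>0$, $\Re(y)>0$ and $\Re(s)>N$, with $y=x+\delta$. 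Writing $e^{-(x+\delta)t}=e^{-xt}e^{-\delta t}$ and keeping $\delta$ \emph{inside}, the small-$t$ expansion $t^N e^{-\delta t}\prod_i(1-e^{-a_it})^{-1}=\sum_{m\ge0}h_m t^m$ has coefficients $h_m=B_{N,m}(\sum_i a_i-\delta\b\underline a)/m!$, which by the reflection identity $B_{N,m}(\sum_i a_i-\delta\b\underline a)=(-1)^m B_{N,m}(\delta\b\underline a)$ (immediate on multiplying the generating function \eqref{Bpoly} through by $e^{-\delta t}$) equals $(-1)^m B_{N,m}(\delta\b\underline a)/m!$. Watson's lemma then yields $\zeta_N(s,x+\delta\b\underline a)\sim\sum_{m\ge0}h_m\,\frac{\Gamma(s+m-N)}{\Gamma(s)}\,x^{N-m-s}$.

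The heart of the computation is the behaviour of the rational function $\Gamma(s+m-N)/\Gamma(s)$ at $s=0$, which I would split into three regimes. For $m<N$ one has $\Gamma(s+m-N)/\Gamma(s)=\prod_{j=1}^{N-m}(s-j)^{-1}$, regular at $s=0$, so differentiating the corresponding term contributes $\frac{(-1)^{N-m}}{(N-m)!}x^{N-m}\big(H_{N-m}-\log x\big)$, the harmonic number $H_{N-m}=\sum_{l=1}^{N-m}l^{-1}$ arising from the logarithmic derivative of the product. For $m=N$ the ratio is identically $1$ and contributes $-h_N\log x$. For $m=N+k$ with $k>0$ the ratio equals $s(s+1)\cdots(s+k-1)$, which vanishes at $s=0$, so only its $s$-derivative $(k-1)!$ survives and gives $(k-1)!\,x^{-k}$. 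Feeding in $h_m=(-1)^mB_{N,m}(\delta\b\underline a)/m!$ and using $(N+k)!/(k-1)!=k(k+1)\cdots(k+N)$, the $m>N$ terms assemble precisely into the stated tail $\sum_{k>0}\frac{(-1)^{N+k}B_{N,N+k}(\delta\b\underline a)}{k(k+1)\cdots(k+N)}x^{-k}$, while the combined coefficient of $\log x$ becomes $\frac{(-1)^{N+1}}{N!}\sum_{m=0}^N\binom Nm B_{N,m}(\delta\b\underline a)x^{N-m}=\frac{(-1)^{N+1}}{N!}B_{N,N}(x+\delta\b\underline a)$ by the convolution identity $B_{N,N}(x+\delta\b\underline a)=\sum_m\binom Nm B_{N,m}(\delta\b\underline a)x^{N-m}$, again read off from \eqref{Bpoly}.

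The remaining polynomial terms (the $H_{N-m}$ pieces for $m<N$) give $\frac{(-1)^N}{N!}\sum_{m=0}^{N-1}c_{N,m}B_{N,m}(\delta\b\underline a)x^{N-m}$ with $c_{N,m}=\binom Nm H_{N-m}$, which is exactly the stated combinatorial factor. To recover the \emph{displayed} form of the polynomial part I would rewrite this via $\log(x+\delta)=\log x+\log(1+\delta/x)$: since the coefficient of $\log(x+\delta)$ is $\frac{(-1)^{N+1}}{N!}B_{N,N}(x+\delta\b\underline a)$, the non-negative-degree part of $\frac{(-1)^{N+1}}{N!}B_{N,N}(x+\delta\b\underline a)\log(1+\delta/x)$ is by definition $\frac{(-1)^{N+1}}{N!}P_{N-1}(x,\delta\b\underline a)$ (only $n\le N$ contributes, as $B_{N,N}$ has degree $N$, which explains the truncation in the definition of $P_{N-1}$), and its negative-degree part merges into the tail. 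Matching the clean form $\sum_m c_{N,m}B_{N,m}(\delta\b\underline a)x^{N-m}$ against $\sum_k c_{N,k}B_{N,k}(0\b\underline a)(x+\delta)^{N-k}+P_{N-1}$ is then a finite polynomial identity (the $x^N$ terms cancel since $c_{N,0}B_{N,0}$ is constant), which I would verify by coefficient comparison using the same convolution identity.

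Finally I would address the range of validity, which is where the real difficulty lies. The integral representation and Watson's lemma are immediate only for $\Re(a_i)>0$ and $\Re(x+\delta)>0$, whereas the theorem asserts the expansion throughout the cut planes $|\arg(x/\lambda_{\underline a})|<\pi$ and $|\arg((x+\delta)/\lambda_{\underline a})|<\pi$. \textbf{The main obstacle is extending the expansion to this full sector.} The homogeneity relation \eqref{homogeneity} lets me rotate $\underline a$ and $x$ together and so reduce to $\Re(a_i)>0$, but it cannot rotate $x$ into the right half-plane when $\arg x$ lies almost opposite the $a_i$; covering those directions requires rotating the contour of integration and controlling $\prod_i(1-e^{-a_it})^{-1}$ along rays approaching its zeros, together with a remainder estimate uniform in $s$ near $0$. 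I would carry this out by a careful contour-rotation argument, tracking the poles of $\Gamma_N$ (which is why the expansion is stated only away from poles), and conclude by the uniqueness of asymptotic expansions.
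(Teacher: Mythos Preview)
Your approach is correct but genuinely different from the paper's. You work directly with the Mellin representation of $\zeta_N(s,x+\delta\b\underline a)$, pull the $\delta$ into the Bernoulli generating function, apply a Watson-type expansion, and then differentiate at $s=0$ using the trichotomy for $\Gamma(s+m-N)/\Gamma(s)$. The paper instead never writes down this integral: it starts from Spreafico's Lerch-type formula $\log\Gamma_N(x\b\underline a)=\Gamma_N(0\b\underline a)-\log\mathcal W(x,\underline a)-\log x+q_N(x\b\underline a)$, takes the difference $\log\Gamma_N(\delta\b\underline a)-\log\Gamma_N(x+\delta\b\underline a)$, expands the Weierstrass product for small $|x|$, and then identifies the piece $\sum_{k>N}\frac{(-1)^{k-1}}{k}x^k\zeta_N(k,\delta\b\underline a)$ with a contour integral of $g(s)=\frac{\pi x^s}{s\sin\pi s}\zeta_N(s,\delta\b\underline a)$ around $\bZ_{>N}$. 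Two auxiliary lemmas (residues of $g$ and vanishing on large arcs and horizontal segments, the latter using Matsumoto's growth bounds for $\zeta_N$) allow the contour to be pushed to $\bZ_{<0}$, which produces the tail in $x^{-k}$; the polynomial and logarithmic part is then read off by comparison with the known first Stirling expansion \eqref{asy} at $y=x+\delta$, exactly as you also do at the end.

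What each buys: your route is more conceptual and in fact rederives the $\delta=0$ first Stirling formula as a special case rather than quoting it, but the phrase ``Watson's lemma then yields'' hides real work --- the individual integrals $\int_0^\infty t^{s+m-N-1}e^{-xt}\,dt$ diverge for $m<N$ and $s$ near $0$, so you need the regularised version (subtract a Taylor jet of $e^{-\delta t}\prod_i(1-e^{-a_it})^{-1}$, integrate the remainder for $\Re s>N$, continue the explicit subtracted terms in $s$), together with enough uniformity in $s$ to justify $\partial_s|_{s=0}$ term by term. The paper's route avoids this analytic continuation in $s$ entirely at the cost of the Lerch/Weierstrass machinery and a separate growth estimate for $\zeta_N$ on horizontal lines. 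Your final paragraph on extending to the full sector $|\arg(x/\lambda_{\underline a})|<\pi$ is the right concern; the paper handles this simply by reducing via homogeneity to $\Re a_i>0$, $|a_i|\le1$, and its contour argument then works for $\Re x>0$ before analytic continuation --- so a comparable reduction plus contour rotation, as you sketch, is what is needed on your side.
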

When $N=1$, the formula above recovers the usual second Stirling expansion for the Gamma function, recalling that $\Gamma_1(x\b a)=\Gamma(x/a)\cdot a^{\frac{x}{a}-\frac{1}{2}}\cdot (2\pi)^{-\half}$. We are particularly interested in the case $N=2$.

\begin{cor}\label{corGamma2exp} Let $a_1,a_2$ be two non-zero complex numbers lying in the same half-plane, $\lambda=\lambda_{(a_1,a_2)}$, $x,\delta\in\bC$. Then
	\begin{equation*}\label{ggg}\begin{aligned}
	\log \Gamma_2\left(x+\delta \b a_1,a_2\right) \sim & -\frac{1}{2} B_{2,2}(x+\delta\b a_1,a_2)\log x + \frac{3x^2}{4 a_1 a_2} - \frac{x(a_1+a_2)}{2 a_1 a_2} + \frac{\delta x}{a_1 a_2} + \\
	&+\sum_{k>0}\frac{(-1)^k\cdot B_{2,k+2}(\delta\b a_1,a_2)}{k(k+1)(k+2)} \cdot x^{-k}
	\end{aligned}\end{equation*}
is valid for $|x|\to\infty$ away from poles as long as $|\arg(x/\lambda)|, |\arg(x+\delta)/\lambda| <\pi$.
\end{cor}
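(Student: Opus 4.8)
The plan is to deduce the corollary directly from Theorem \ref{stirling2} by specializing to $N=2$ and writing out the combinatorial and polynomial data explicitly; there is no new analytic input, so the work is entirely a matter of bookkeeping.

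First I would set $N=2$ in the general expansion. The prefactor becomes $\tfrac{(-1)^{N+1}}{N!}=-\tfrac12$, the leading term is $B_{2,2}(x+\delta\b a_1,a_2)\log(x)$, and the tail sum reads $\sum_{k>0}\tfrac{(-1)^{2+k}B_{2,k+2}(\delta\b a_1,a_2)}{k(k+1)(k+2)}\,x^{-k}$, which coincides with the claimed tail since $(-1)^{2+k}=(-1)^k$. It therefore remains only to unwind the polynomial part inside the outer parentheses.

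Next I would record the two combinatorial factors $c_{2,0}=\binom{2}{0}(1+\tfrac12)=\tfrac32$ and $c_{2,1}=\binom{2}{1}\cdot 1=2$, and substitute the explicit values $B_{2,0}(0\b a_1,a_2)=(a_1a_2)^{-1}$ and $B_{2,1}(0\b a_1,a_2)=-\tfrac{a_1+a_2}{2a_1a_2}$ from Section \ref{sec_B2}. The finite sum $\sum_{k=0}^{1}c_{2,k}B_{2,k}(0\b a_1,a_2)(x+\delta)^{2-k}$ then becomes $\tfrac{3(x+\delta)^2}{2a_1a_2}-\tfrac{(a_1+a_2)(x+\delta)}{a_1a_2}$. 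In parallel I would extract $P_1(x,\delta\b a_1,a_2)$ as the non-negative-degree part of $B_{2,2}(x+\delta\b a_1,a_2)\bigl(\delta x^{-1}-\tfrac{\delta^2}{2}x^{-2}\bigr)$; writing $B_{2,2}(x+\delta\b a_1,a_2)=(a_1a_2)^{-1}x^2+A_1x+A_0$ makes this mechanical and yields $P_1=\tfrac{\delta}{a_1a_2}x+\tfrac{3\delta^2}{2a_1a_2}-\tfrac{(a_1+a_2)\delta}{a_1a_2}$.

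Finally I would assemble the three contributions, multiply through by $-\tfrac12$, and simplify. The key observation is that when the expanded polynomial $-\tfrac{3(x+\delta)^2}{2a_1a_2}+\tfrac{(a_1+a_2)(x+\delta)}{a_1a_2}$ (the negative of the finite sum) is added to $P_1$, every term independent of $x$ cancels: the two $\tfrac{\delta^2}{a_1a_2}$ contributions and the two $\tfrac{(a_1+a_2)\delta}{a_1a_2}$ contributions annihilate in pairs. What survives is $-\tfrac{3x^2}{2a_1a_2}+\tfrac{(a_1+a_2)x}{a_1a_2}-\tfrac{2\delta x}{a_1a_2}$, and multiplying by $-\tfrac12$ produces exactly $\tfrac{3x^2}{4a_1a_2}-\tfrac{x(a_1+a_2)}{2a_1a_2}+\tfrac{\delta x}{a_1a_2}$, which together with the logarithmic term and the tail sum already identified is the claimed expansion. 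The validity domain $|\arg(x/\lambda)|,|\arg((x+\delta)/\lambda)|<\pi$ is inherited verbatim from the hypotheses of Theorem \ref{stirling2}. The only real obstacle is arithmetic care in extracting $P_1$ and in verifying these cancellations; conceptually the statement is an immediate corollary.
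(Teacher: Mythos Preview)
Your proof is correct and follows exactly the same approach as the paper: specialize Theorem~\ref{stirling2} to $N=2$, compute $c_{2,0}$, $c_{2,1}$, $B_{2,0}(0)$, $B_{2,1}(0)$, extract the non-negative part $P_1$ of $B_{2,2}(x+\delta)\bigl(\delta x^{-1}-\tfrac{\delta^2}{2}x^{-2}\bigr)$, and verify the cancellations. Your value $c_{2,1}=\binom{2}{1}\cdot 1=2$ is the correct one and is what is needed for the constant terms to cancel; the paper's own proof records $c_{2,1}=1$, which appears to be a typo.
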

\begin{proof} It is an application of the Theorem above when $N=2$. In particular we have
	\[ B_{2,2}(x+\delta\b a_1,a_2)\cdot \left(\frac{\delta}{x}-\frac{\delta^2}{2 x^2}\right) = \frac{x\delta}{a_1a_2} + \frac{2\delta^2}{a_1a_2} -\frac{\delta^2}{2a_1a_2} - \delta\left(\frac{1}{a_1}+\frac{1}{a_2}\right) + O(x^{-1}).\]
We also have $B_{2,0}(0\b a_1,a_2)=\frac{1}{a_1a_2}$, $B_{2,1}(0\b a_1,a_2)=-\frac{a_1+a_2}{2a_1a_2}$, and $c_{2,0}=1\cdot\frac{3}{2}$, $c_{2,1}=1$. 
\end{proof}

The proof of Theorem \ref{stirling2} given below is mostly a rephrasing of the proof of the asymptotic expansion of $\Gamma(x+\delta)$ that can be found in \cite[{Sec.\ 13.6}]{WW}. It is based on the comparison with the standard asymptotic expansion of $\log\Gamma_N(y\b \underline{a})$ when $|y|\to\infty$, $y\not\in\bR_{<0}$, \cite[{Eq.\ 3.13}]{Ruj2}
	\begin{equation}\label{asy}\begin{split}
	\frac{(-1)^{N+1}}{N!}B_{N,N}(y\b \underline{a})\log(y) + (-1)^N\sum_{k=0}^{N-1} \frac{B_{N,k}(0\b\underline{a})y^{N-k}}{k!(N-k)!}\sum_{l=1}^{N-k}l^{-1} + \\
	+ \sum_{k\geq N+1} (-1)^k\frac{(k-N-1)!}{k!}B_{N,k}(0)(y)^{N-k},
	\end{split}\end{equation}
and depends on the next results. We denote by
	\begin{equation*}
	\mathcal{W}\left(x,\underline{a}\right):=\prod_{\underline{n}\in\bN^N\setminus\{0\}}\left(1+\frac{x}{\underline{n}\cdot\underline{a}}\right)\cdot \exp\left(\sum_{j=1}^N\frac{(-1)^j}{j}\frac{x^j}{(\underline{n}\cdot\underline{a})^j}\right)
	\end{equation*}
the canonical Weierstrass product associated with $\zeta_N(s,x\b \underline{a})$. It is uniformly and absolutely convergent in any bounded closed region of the complex plane% \cite{Spreaf2}
, meaning that the corresponding logarithm series converges uniformly and absolutely. An application of the \lq\lq Lerch formula\rq\rq\ by Spreafico, \cite[{Prop.\ 2.9}]{Spreaf2}, shows that
	\begin{equation}\label{lerch_prop} 
	\log \Gamma_N\left(x\b \underline{a}\right) %\zeta'_N(0,x\b\underline a) 
	= \Gamma_N(0\b\underline a) - \log \mathcal{W}\left(x,\underline{a}\right) - \log x + q_N(x\b\underline{a}),
	\end{equation}
	for a polynomial $q_N(x\b\underline{a})$ of degree $N$. The polynomial $q_N(x\b\underline{a})$ is explicitly given in \cite{Spreaf2} in terms of the residues of $\zeta_N(s,0\b\underline{a})$.\footnote{Note that in Spreafico's notation $\zeta(s,S_x)$ and $F(x,S_0)$ correspond respectively to our $ x^{-s}\cdot\zeta_N\left(s,x\b \underline{a}\right)$ and $\mathcal{W}(x,\underline{a})$, assuming $S_x$ to be the sequence $S_x=\left(\underline{n}\cdot\underline{a}+x\right)_{\underline{n}\in\bN^N\setminus\{0\}}$.}
For fixed $\delta\in\bC$, $x\in\bC^*$, with $|\arg x|<\pi$, we introduce the function $$g(s) := \frac{\pi x^s}{s\cdot \sin(\pi s)}\zeta_N(s,\delta\b \underline{a}).$$
\begin{lemma}\label{lem1}
$g(s)$ has poles at $s\in\bZ$ whose residues are well-defined functions in $x$. In particular
\begin{align*}
\text{if }s&=k\in\bZ\setminus\{0,\dots,N\} & Res(g,k) &= \frac{(-1)^k\cdot x^k}{k} \zeta_N\left(k,\delta\b \underline{a}\right),\\
\text{if }s&=0 & Res(g,0) &= \zeta_N(0,\delta\b \underline{a})\cdot \log x + \log \Gamma_N(\delta\b \underline{a}),
\end{align*}
while for $s=j\in\{1,\dots,N\}$, $Res(g,j)$ is of type $c_j\log(x)+d_j$, for some $c_j,d_j\in\bC$. 
\end{lemma}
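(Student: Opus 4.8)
The plan is to read the pole structure of $g(s)$ off its three factors and then extract each residue by a local Laurent expansion. The factor $x^s=e^{s\log x}$ is entire in $s$, and with the principal branch of $\log x$ (single-valued since $|\arg x|<\pi$) it is a genuine function of $x$; thus the poles of $g$ come only from $\frac{\pi}{s\sin(\pi s)}$ and from $\zeta_N(s,\delta\b\underline a)$. The elementary factor has poles exactly at $s\in\bZ$, while by the analytic continuation recalled in Section~\ref{sec_barnes} the zeta factor is meromorphic with simple poles only at $s=1,\dots,N$. Hence every pole of $g$ lies at an integer, and because $\log x$ is the only transcendental quantity entering, the residues will automatically be well-defined functions of $x$.

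First I would record the local behaviour of $\frac{\pi}{s\sin(\pi s)}$. Near an integer $k$ one has $\sin(\pi s)=(-1)^k\pi(s-k)+O((s-k)^3)$, so $\frac{\pi}{\sin(\pi s)}$ has a simple pole of residue $(-1)^k$; dividing by the regular factor $s^{-1}$ gives, for $k\neq0$, a simple pole of $\frac{\pi}{s\sin(\pi s)}$ with residue $\frac{(-1)^k}{k}$. At $s=0$ the two reciprocals combine, but since $\frac{\pi}{s\sin(\pi s)}$ is even in $s$ its Laurent expansion is $s^{-2}+\frac{\pi^2}{6}+O(s^2)$, a pure double pole with no $s^{-1}$ term.

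These local expansions give the three cases directly. For $k\in\bZ\setminus\{0,\dots,N\}$ the zeta factor is holomorphic, so $g$ has a simple pole and $\operatorname{Res}(g,k)=\frac{(-1)^k}{k}\,x^k\,\zeta_N(k,\delta\b\underline a)$. At $s=0$ the double pole of $\frac{\pi}{s\sin(\pi s)}$ meets the regular factor $x^s\zeta_N(s,\delta\b\underline a)$; expanding $x^s=1+s\log x+\cdots$ and $\zeta_N(s,\delta\b\underline a)=\zeta_N(0,\delta\b\underline a)+s\,\del_s\zeta_N(s,\delta\b\underline a)\big|_{s=0}+\cdots$, the absence of an $s^{-1}$ term in the elementary factor forces the coefficient of $s^{-1}$ in the product to be $\zeta_N(0,\delta\b\underline a)\log x+\del_s\zeta_N(s,\delta\b\underline a)\big|_{s=0}$. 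By the definition of the multiple gamma function (Section~\ref{sec_barnes}), $\del_s\zeta_N(s,\delta\b\underline a)\big|_{s=0}=\log\Gamma_N(\delta\b\underline a)$, which yields the stated value of $\operatorname{Res}(g,0)$.

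Finally, for $j\in\{1,\dots,N\}$ both singular factors contribute a simple pole, so $g$ has a double pole at $s=j$. Writing $\frac{\pi}{s\sin(\pi s)}=\frac{(-1)^j/j}{s-j}+B_j+\cdots$ and $\zeta_N(s,\delta\b\underline a)=\frac{R_j}{s-j}+S_j+\cdots$ with $R_j$ the residue of the zeta function, and multiplying by $x^s=x^j\big(1+(s-j)\log x+\cdots\big)$, the coefficient of $(s-j)^{-1}$ is the sum of a multiple of $x^j\log x$ (produced by the two principal parts against the linear term of $x^s$) and a multiple of $x^j$; thus $\operatorname{Res}(g,j)$ has the asserted form $c_j\log x+d_j$, the coefficients being constants times the monomial $x^j$. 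The only real work is the double-pole bookkeeping at $s=0$ and $s=j$, and the single substantive input is the identity $\del_s\zeta_N(s,\delta\b\underline a)\big|_{s=0}=\log\Gamma_N(\delta\b\underline a)$; everything else is routine.
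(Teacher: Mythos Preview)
Your proof is correct and follows essentially the same approach as the paper's: both arguments expand each factor of $g(s)$ locally around $s=k$ and read off the coefficient of $(s-k)^{-1}$, invoking the definition $\log\Gamma_N(\delta\b\underline a)=\partial_s\zeta_N(s,\delta\b\underline a)\big|_{s=0}$ at the origin. Your observation that $\frac{\pi}{s\sin(\pi s)}$ is even (hence has no $s^{-1}$ term at $0$) is a clean way to handle that case, and your remark that $c_j,d_j$ carry a factor $x^j$ is in fact slightly more precise than the paper's own computation.
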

\begin{proof} We expand in series around $s=k\in\bZ$
	\begin{equation*}\begin{aligned}
	x^s&=x^k \cdot \sum_{n\geq 0} \frac{\big((s-k)\log x\big)^n}{n!} = x^k \cdot \big( 1+(s-k)\log x+O(s-k)^2\big) \\
	\frac{1}{s} &= \frac{1}{k} \cdot \frac{1}{1-\left(\frac{k-s}{k}\right)} = \frac{1}{k} \sum_{n\geq 0} \left(\frac{-(s-k)}{k}\right)^n = \frac{1}{k}-\frac{1}{k^2}(s-k) + O(s-k)^2\\
	\sin(\pi s)^{-1} &= (-1)^k\big(\sin \pi(s-k)\big)^{-1} = (-1)^k \left(\frac{1}{\pi(s-k)} + \frac{1}{6}\pi(s-k) + O(s-k)^3\right).
	\end{aligned}\end{equation*}
Around $s=0$, the Taylor series of $\zeta_N(s,\delta\b \underline{a})$ is $\zeta_N(0,\delta\b \underline{a})+ s \log\Gamma_N(\delta\b\underline a) + O(s^2)$. The multiple zeta function $\zeta_N(k,\delta\b \underline{a})$ has poles at $k=1,\dots,N$, around which it can be written as 
	\[
	\zeta_N\left(s,x\b \underline{a}\right) = R^{-1}_N(k,x\b\underline a)\cdot (s-k)^{-1} + R_N^0(k,x\b\underline a) + O(s-k).\]
$Res(g,j)$ is therefore given by $\left(\log x -\frac{1}{j} \right) R_N^{-1}(k,\delta\b\underline a) + R_N^0(k,\delta\b\underline a)$.
\end{proof}
\begin{lemma}\label{lem2} Assume $\Re x >0$, $\Re a_i>0$, $|a_i|\leq 1$ for all $i$. In the following cases%The integral $\int_{\cC}g(s)ds$ vanishes when%\\
	%a) $|x|<1$, and $\cC$ is an arc of large radius contained in $\Re s >N$ and centered on $\bar{s}\in\bR$, $N-1<\bar{s}<N$,\\
	%b) $\cC=I\times i R$, $I\subset\{\Re s<N+1\}$ a closed real interval, and $R\gg 0$.
	\begin{itemize}
	\item[a)]
	$|x|<1$, and $\cC$ is an arc of large radius contained in $\Re s >N$ and centered on $\bar{s}\in\bR$, $N-1<\bar{s}<N$,
	\item[b)] 
	$\cC=I\times i R$, $I\subset\{\Re s<N+1\}$ a closed real interval, and $R\gg 0$,
	\end{itemize}
	the integral $\int_{\cC}g(s)ds$ vanishes.
	\end{lemma}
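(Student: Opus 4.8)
The plan is to bound $|g(s)|$ pointwise and show that $\int_{\cC}|g(s)|\,|ds|$ tends to $0$ --- as the radius of the arc tends to infinity in case (a), and as $R\to\infty$ in case (b). Writing $s=\sigma+it$, I would lean on three ingredients. First, the elementary identity $|x^s|=|x|^{\sigma}\,e^{-t\arg x}$, which is harmless because $\Re x>0$ forces $|\arg x|<\tfrac{\pi}{2}$. Second, the classical lower bound $|\sin\pi s|\geq|\sinh\pi t|$, giving $|\sin\pi s|^{-1}\lesssim e^{-\pi|t|}$ away from the integers. Third, the fact that the analytically continued function $\zeta_N(s,\delta\b\underline a)$ grows at most polynomially, say $|\zeta_N(s,\delta\b\underline a)|\lesssim |t|^{A}$ for some $A\ge 0$, on any vertical strip away from its poles at $s=1,\dots,N$. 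Throughout I would take the contours to avoid the poles of $(s\sin\pi s)^{-1}$ at the integers and the real poles of $\zeta_N$; in case (a) one may fix the radius to be a half-integer, and in both cases the large value of $|t|$ keeps the contour far from $s=1,\dots,N$.

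For case (b) the contour is the horizontal segment $t=R$, $\sigma\in I$, with $I$ bounded. Here $|x^s|=|x|^{\sigma}e^{-R\arg x}\le C\,e^{R|\arg x|}$, while $|s|\ge R$ and $|\sin\pi s|^{-1}\lesssim e^{-\pi R}$. Combining these with the polynomial bound on $\zeta_N$ yields
\[
|g(s)|\lesssim \frac{1}{R}\,e^{R|\arg x|}\,e^{-\pi R}\,R^{A}=R^{A-1}\,e^{-(\pi-|\arg x|)R}.
\]
Since $|\arg x|<\tfrac{\pi}{2}<\pi$ and $I$ has finite length, this gives $\int_{\cC}|g|\,|ds|\lesssim R^{A-1}e^{-(\pi-|\arg x|)R}\to 0$ as $R\to\infty$, as required.

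For case (a) I would parametrise the arc as $s=\bar s+\rho\,e^{i\phi}$ with $\phi\in(-\phi_0,\phi_0)$, where $\cos\phi_0=(N-\bar s)/\rho\to 0$ so that $\phi_0\to\tfrac{\pi}{2}$; thus $\sigma=\bar s+\rho\cos\phi$ and $t=\rho\sin\phi$. Since $|x|<1$ I set $-c:=\log|x|<0$, and the first two factors combine, up to an implied constant, as
\[
|x^s|\cdot|\sin\pi s|^{-1}\lesssim e^{-c\rho\cos\phi}\,e^{-\rho\sin\phi\,\arg x}\,e^{-\pi\rho|\sin\phi|}\le e^{-\rho\,\big(c\cos\phi+(\pi-|\arg x|)|\sin\phi|\big)}.
\]
The exponent in parentheses is continuous and strictly positive on $[-\tfrac{\pi}{2},\tfrac{\pi}{2}]$: it equals $c>0$ at $\phi=0$ and $\pi-|\arg x|>0$ at $\phi=\pm\tfrac{\pi}{2}$, hence is bounded below by some $m>0$ uniformly in $\phi$. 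The arc lies in $\Re s\ge N$ and meets the pole $s=N$ only as $|t|\to\infty$, so the polynomial bound $|\zeta_N|\lesssim\rho^{A}$ applies there; using $|ds|=\rho\,d\phi$ and $|s|\asymp\rho$ I obtain $\int_{\cC}|g|\,|ds|\lesssim \rho^{A}\,e^{-m\rho}\to 0$ as $\rho\to\infty$.

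The step I expect to be the main obstacle is justifying the polynomial vertical growth of $\zeta_N(s,\delta\b\underline a)$ invoked above; everything else is a routine combination of the $\sin$-factor decay with the $x^s$ factor. For $N=1$ this growth follows from $\zeta_1(s,\delta\b a)=a^{-s}\zeta_H(s,\delta/a)$ together with the standard convexity bounds for the Hurwitz zeta function in vertical strips, and the general case should then follow by induction on $N$ from the difference relation for $\zeta_N$ recalled in Section \ref{sec_barnes}, or directly from the description of the continuation in \cite[Section 3]{Ruj2}. The hypotheses $\Re a_i>0$ and $|a_i|\le 1$ would enter only to make these bounds uniform in the parameters.
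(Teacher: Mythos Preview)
Your overall strategy is correct and is essentially the same as the paper's: bound $|g(s)|$ using the exponential decay of $|\sin\pi s|^{-1}$ against polynomial growth of $\zeta_N$, and let the contour parameter go to infinity. The paper is terser. For (a) it simply invokes the fact that $\zeta_N(s,\delta\b\underline a)\to 0$ as $|s|\to\infty$ in the half-plane $\Re s>N$ (citing \cite[Eq.\ 3.8]{Ruj2}), leaving the control of the remaining factors implicit; your explicit bound $e^{-\rho(c\cos\phi+(\pi-|\arg x|)|\sin\phi|)}$ is a more careful version of the same estimate and buys you a self-contained argument. For (b) the paper proceeds exactly as you propose.

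One point to flag in your final paragraph: the difference relation recalled in Section~\ref{sec_barnes} shifts the \emph{second} argument $x$, not $s$, so iterating it does not produce an inductive bound on vertical growth in $s$. The paper instead appeals to a result of Matsumoto \cite{matsumoto} giving
\[
\zeta_N(s,\delta\b\underline a)=\Big(\frac{1}{s-1}+\frac{1}{2}\Big)\zeta_{N-1}(s-1,\delta\b a_1,\dots,a_{N-1})+O(1),
\]
which simultaneously shifts $s$ and lowers $N$, so that the induction starting from the classical Hurwitz bound $\zeta_H(s,\delta)=O(|\Im s|^{1-\Re s}\log|\Im s|)$ goes through. Your alternative suggestion of reading off the polynomial bound directly from the contour-integral continuation in \cite[Section 3]{Ruj2} would also work; it is only the difference-relation route that does not.
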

	\begin{proof}
	a) follows from the fact that $\zeta_N(s,\delta\b\underline{a})\to 0$ for $\Re s> N$, $|s|\gg 0$, \cite[{Eq.\ 3.8}]{Ruj2}. For b) we prove that the integrand is dominated by $e^{-|\Im s|}$ when $\Re s$ is bounded above and below and $|\Im s| \gg 0$. By Theorem 3 in \cite{matsumoto} we have that, for $s\in\bC\setminus\{1,\dots,N\}$,	
	\begin{equation*}
	\zeta_N(s,\delta\b\underline{a})= \left(\frac{1}{s-1}+\frac{1}{2}\right)\zeta_{N-1}(s-1,\delta\b (a_1,\dots,a_{N-1})) + O(1).
	\end{equation*}
We know that when $\Re s\leq 1$, $\zeta(s,\delta) = O\big(|\Im s|^{1-\Re s}\cdot\log|\Im s|\big)$, \cite[{Sec.\ 13.5}]{WW}. Then proceeding inductively we obtain a bounded behaviour for $\zeta_N(s,\delta\b \underline a)$. For $|\Im s| \gg 0$ the integrand is dominated by $\frac{1}{s\sin s}\sim e^{-|\Im s|}$. The conclusion follows.
\end{proof}

\begin{proof}[{Proof of Theorem \ref{stirling2}}]
We first observe that, since $\zeta_N'(s,x\b x,\underline{a})_{|s=0} = \frac{(-1)^N}{N!} B_{N,N}(x\b\underline{a})$, the following holds
	\[\log \Gamma_N(\lambda x\b \lambda \underline{a})= \frac{(-1)^{N+1}}{N!}B_{N,N}(x\b \underline{a})\log \lambda +\log \Gamma_N(x\b \underline{a}).
	\]
This, together with the homogeneity property \eqref{homoBnk}, implies that %
it is enough to prove the statement for $\Re(a_i)\geq 0$, $|a_i|\leq 1$.%if the statement holds for
%	\begin{equation}\label{techassumpt}
%	\Re(a_i)\geq 0,\quad |a_i|\leq 1,
%	\end{equation}
%it holds for every $\underline{a}$ with $a_i$ in the same open half-plane. Therefore we restrict to the case \eqref{techassumpt}.
	
Assume initially that $\Re(x)>0$. Consider the difference $\log \Gamma_N(\delta\b \underline{a})-\log\Gamma_N(x+\delta\b \underline{a})$. By \eqref{lerch_prop} it equals
	\begin{equation}\label{1}\begin{split}
	q_N(x\b\underline{a})-q_N(x+\delta\b\underline{a}) + \log \frac{\mathcal{W}(x+\delta,\,\underline{a})}{\mathcal{W}(\delta,\,\underline{a})} + \log\left(\frac{x+\delta}{\delta}\right).
	\end{split}\end{equation}
$\log \frac{\mathcal{W}(x+\delta,\, \underline a)}{\mathcal{W}(\delta,\, \underline a)}$ is the absolutely convergent series 
	\begin{gather*}
	\sum_{\underline{n}\in\bN^N\setminus\{0\}} \left[\log\left(1+\frac{x}{\underline{n}\cdot\underline{a}+\delta}\right) + \sum_{j=1}^N \frac{(-1)^j}{j}\left(\frac{(x+\delta)^j}{(\underline{n}\cdot \underline{a})^j}- \frac{\delta^j}{(\underline{n}\cdot \underline{a})^j}\right)\right] .
	\end{gather*}
For $|x|<\min\{1, |\delta|, |a_i|\b i=1,\dots N\}$ the logarithms $\log\left(1+\frac{x}{\underline{n}\cdot\underline{a}+\delta}\right)$ and $\log\left(\frac{x+\delta}{\delta}\right)$ can be expanded in (absolutely convergent) series and the second half of \eqref{1} reads
	\begin{equation}\label{2}\begin{split}
		\sum_{\underline{n}\in\bN^N\setminus\{0\}}
		\left( \sum_{k=1}^N\frac{(-1)^{k-1}x^k}{k}  \frac{1}{(\underline n \cdot \underline a + \delta)^k} + 
		\sum_{k>N}\frac{(-1)^{k-1}x^k}{k}  \frac{1}{(\underline n \cdot \underline a + \delta)^k} 	\right) + \\
		\sum_{\underline{n}\in\bN^N\setminus\{0\}}  
		\sum_{j=1}^N \frac{(-1)^j}{j}\left(\frac{(x+\delta)^j}{(\underline{n}\cdot \underline{a})^j}- \frac{\delta^j}{(\underline{n}\cdot \underline{a})^j}\right) + 
		\sum_{k=1}^N\frac{(-1)^{k-1}x^k}{k}  \frac{1}{\delta^k} + 
		\sum_{k>N}\frac{(-1)^{k-1}x^k}{k}  \frac{1}{ \delta^k} .
	\end{split}
	\end{equation}
The two sums over $k>N$ converge absolutely and can be extracted from the series. Switching the order of the sum, they give 
	\begin{equation*}
	 \sum_{k>N} \frac{(-1)^{k-1}}{k} x^k \zeta_N(k,\delta\b \underline a),
	\end{equation*}
which in turn, by Lemma \ref{lem1}, coincides with $\int_{\cC}\frac{\pi x^s}{s\cdot \sin(\pi s)}\zeta_N(s,\delta\b\underline{a})ds$, where $\cC$ is a contour encircling clockwise the integers $k>N$. We assume for a moment that $|x|<1$. Keeping in mind Lemma \ref{lem2}, we deform continuously $\cC$ to a path encircling the negative integers. Applying again Lemma \ref{lem1}, it equals
	\[
		C\log(x)+D + \zeta_N(0,\delta\b \underline a) \cdot \log x + \log \Gamma_N(\delta\b \underline a) + \sum_{k>0}\frac{(-1)^kx^{-k}}{-k}\zeta_N(-k,\delta\b \underline a),
	\]
for some complex numbers $C,D$. We recall that $\zeta_N(-k,\delta\b \underline a)= \frac{(-1)^N k!}{(N-k)!}B_{N,N-k}(\delta\b \underline{a})$ for $k\in\bN$. Summing everything together, the resulting expression for 
	\[
	\log \Gamma_N(\delta\b \underline{a}) - \log \Gamma_N(x+\delta\b \underline{a}),
	\]
for $|x|<\min\{|\delta|, |a_i|\b i=1,\dots N\}$, becomes
	\begin{gather*}
	\log \Gamma_N(\delta\b \underline a) + p(x,\delta,\underline{a}) + \sum_{k>0}\frac{(-1)^{N+k}k!}{-k(N+k)!}B_{N,N+k}(\delta\b\underline{a})x^{-k},
	\end{gather*}
where $p(x,\delta,\underline{a})$ contains polynomial and logarithmic terms and the contribution of the remaining of \eqref{2}.
By analytic continuation, the formula holds for every $x\in\bC$, $|\arg x|<\pi$, away from poles. 
Assuming also $|\arg(x+\delta)|<\pi$, we compare it with \eqref{asy} evaluated at $y=x+\delta$.
This, up to terms of order $O(x^{-1})$, can be rewritten as
	\begin{gather}
		\frac{(-1)^{N+1}}{N!}B_{N,N}(x+\delta\b \underline{a})\log(x) + (-1)^N\sum_{k=0}^{N-1} \frac{B_{N,k}(0\b\underline{a})\sum_{l=1}^{N-k}l^{-1}}{k!(N-k)!}(x+\delta)^{N-k}+\label{g1}\\
		+\frac{(-1)^{N+1}}{N!}B_{N,N}(x+\delta\b \underline{a})\log\left(1+\frac{\delta}{x}\right) \label{g2}
	\end{gather}
$B_{N,N}(x\b \underline{a})$ is a polynomial of degree $N$. The comparison shows that $p(x,\delta,\underline{a})$ must equal 
\[\eqref{g1} %+ \eqref{g3} 
+ P_{N-1}(x,\delta\b\underline{a}),\]
where $P_{N-1}(x,\delta\b\underline{a})$ consists in the non-negative degree terms of \eqref{g2} after expanding in series $\log\left(1+\frac{\delta}{x}\right)$. 
\end{proof}

\end{document}